\newtheorem{theorem}{Theorem}%[section]
\newtheorem{proposition}[theorem]{Proposition}
\newtheorem{corollary}[theorem]{Corollary}
\newtheorem{definition}[theorem]{Definition}
{\theorembodyfont{\rm}

}
\newenvironment{proof}[1][Proof]{\noindent\textbf{#1.} }{\hfill
\rule{0.5em}{0.5em}}
\newcommand{\cB}{{\mathcal B}}
\newcommand{\cE}{{\mathcal E}}
\newcommand{\cH}{{\mathcal H}}
\newcommand{\cI}{{\mathcal I}}
\newcommand{\cK}{{\mathcal K}}
\newcommand{\cL}{{\mathcal L}}
\newcommand{\cM}{{\mathcal M}}
\newcommand{\cR}{{\mathcal R}}
\newcommand{\cS}{{\mathcal S}}
\newcommand{\cV}{{\mathcal V}}
\newcommand{\sC}{{\mathbb C}}
\newcommand{\sN}{{\mathbb N}}
\newcommand{\sR}{{\mathbb R}}
\newcommand{\sT}{{\mathbb T}}
\newcommand{\sZ}{{\mathbb Z}}
\newcommand{\diag}{\mbox{\rm diag} \,}
\newcommand{\val}{{\rm val}}
\begin{document}
\title{Pseudodifferential operators on periodic graphs}
\author{V. Rabinovich, S. Roch}
\date{}
\maketitle
\begin{abstract}
The main aim of the paper is Fredholm properties of a class of bounded
linear operators acting on weighted Lebesgue spaces on an infinite metric graph
$\Gamma$ which is periodic with respect to the action of the group $\sZ^n$.
The operators under consideration are distinguished by their local behavior:
they act as (Fourier) pseudodifferential operators in the class $OPS^0$ on
every open edge of the graph, and they can be represented as a matrix Mellin
pseudodifferential operator on a neighborhood of every vertex of $\Gamma$.
We apply these results to study the Fredholm property of a class of singular
integral operators and of certain locally compact operators on graphs.
\end{abstract}
\section{Introduction}
Schr\"{o}dinger operators on combinatorial and quantum graphs have attracted a
lot of attention in the last time due to their interesting properties and
existing and expected applications in nano-structures (see, for instance,
\cite{Berk,Kuch1,Kuch3,Kuch4,Kuch5} and the references cited there).
The present paper is devoted to a quite general class of bounded linear
operators acting on weighted Lebesgue spaces on an infinite metric graph
$\Gamma$ which is periodic with respect to the action of the group $\sZ^n$.
Our main emphasis is on Fredholm properties of these operators.

More precisely, the operators under consideration are distinguished by their
local behavior: they act as (Fourier) pseudodifferential operators in the class
$OPS^0$ on every open edge of the graph, and they can be represented as a matrix
Mellin pseudodifferential operator on a neighborhood of every vertex of
$\Gamma$. The appearance of Mellin convolution operators in this context is
quite natural: near a vertex of the graph, a pseudodifferential operator can be
written as a sum of a singular integral operator on a system of rays and a
locally compact operator, and every operator of this form corresponds to a
matrix Mellin convolution or, more general, a matrix Mellin pseudodifferential
operator (see, for instance, Chap. 4 in \cite{RSS}, Chap. 4.6 in \cite{RRS},
\cite{RIzv} and references cited there). Mellin convolution operators were used
in \cite{OP} to study pseudodifferential operators on {\em finite} graphs.

When studying Fredholm properties of general (non-periodic) operators, the most
challenging part is to understand their local invertibility at infinity. For
this goal, we use the limit operators method (see \cite{RRS} for an overview)
which will allow us to reduce the local invertibility at infinity to the
invertibility of every single operator in a family of periodic operators, the
so-called limit operators. The limit operators method already proved to be a
very effective tool for the investigation of essential spectra of operators on
combinatorial periodic graphs in \cite{RR1,RR2} and of essential spectra of
Schr\"{o}dinger, Dirac and Klein-Gordon operators on $\sR^n$ in \cite{RJMP}.

The paper is organized as follows. In Section 2 we recall some auxiliary
material concerning Wiener algebras on $\sZ^n$, (Fourier) pseudodifferential
operators on $\sR$, and matrix Mellin pseudodifferential operators on $\sR_+ =
[0, \, \infty)$. In Section 3 we introduce a class of pseudodifferential
operators on a periodic graph and establish necessary and sufficient
conditions for their Fredholmness. Our basic tool to study the Fredholm
property is Simonenko's local principle (see \cite{Sim} and Section
2.5 in \cite{RSS}). The limit operators method can be used to relate the local
invertibility at infinity to the invertibility of periodic operators, to which
then a version of Floquet theory can be applied. In Section 4 we consider two
applications:
\begin{itemize}
\item[(i)]
the Fredholm property of singular integral operators $A = aI + b \cS_{\Gamma,
\phi}$ on a periodic graph $\Gamma \subset \sR^2$, where $a, \,b$ are
certain bounded slowly oscillating and piecewise continuous functions with
discontinuities only at the vertices of $\Gamma$, and where $\cS_{\Gamma, \phi}$
is the singular integral operator
\[
(\cS_{\Gamma, \phi} u)(x) = \frac{1}{\pi i} \int_\Gamma
\frac{\phi(x,x-y)u(y)} {y-x} dy, \quad x \in \Gamma,
\]
with $\phi \in C^\infty (\Gamma \times \sR^2)$ a function the decaying behavior
of which will be specified below;
\item[(ii)]
the Fredholm property of integral operators $A = aI + bT$ on a periodic
graph $\Gamma \subset \sR^2$ where $a, \, b$ are bounded, uniformly
continuous and slowly oscillating functions on $\Gamma$ and $T$ is an integral
operator of the form
\[
(Tu)(x) = \int_\Gamma k(x-y) u(y) dy, \quad x \in \Gamma,
\]
where $k: \sR^2 \to \sC$ is a continuous function with the property that
there are positive constants $C$ and $\varepsilon$ such that
\[
|k(z)| \le C (1 + |z|)^{-2 - \varepsilon} \quad \mbox{for all} \; z \in \sR^2.
\]
\end{itemize}
This work was supported by CONACYT Project 81615 and DFG Grant Ro 1100/8-1.
\section{Auxiliary material}
\subsection{Wiener algebras on $\sZ^n$}
Given a complex Banach space $X$, let $\cB(X)$ denote the Banach algebra of all
bounded linear operators on $X$ and $\cK(X)$ the closed ideal of $\cB(X)$ of
all compact operators. For $A \in \cB(X)$, we write $sp_X A$ for the spectrum
and $ess\,sp_X A$ for the essential spectrum of $A$, i.e., for the set of all
$\lambda \in \sC$ such that the operator $A - \lambda I$ is not Fredholm on $X$.
Further we let $l^p (\sZ^n, \, X)$ stand for the Banach space of all functions
$u : \sZ^n \to X$ with
\[
\|u\|^p_{l^p (\sZ^n, \, X)} := \sum_{x \in \sZ^n} \|u(x)\|_X^p < \infty
\]
if $p \in [1, \, \infty)$ and
\[
\|u\|_{l^\infty (\sZ^n, \, X)} := \sup_{x \in \sZ^n} \|u(x)\|_X < \infty.
\]
On $l^p (\sZ^n, \, X)$, we consider operators of the form
\begin{equation} \label{e1neu}
A = \sum_{\alpha \in \sZ^n} a_\alpha \tau_\alpha
\end{equation}
where $a_\alpha \in l^\infty (\sZ^n, \, \cB(X))$ and $\tau_\alpha$ is the
operator of shift by $\alpha \in \sZ^n$,
\[
(\tau_\alpha u)(x) := u(x-\alpha), \quad x \in \sZ^n.
\]
We say that the operator $A$ in (\ref{e1neu}) belongs to the Wiener algebra
$W(\sZ^n, \, X)$ if
\[
\|A\|_{W(\sZ^n, X)} := \sum_{\alpha \in \sZ^n} \|a_\alpha\|_{\cB(X)} <
\infty.
\]
It is well known that $W(\sZ^n, \, X) \subset \cB(l^p(\sZ^n, \, X))$ for every
$p\in [1, \, \infty]$ and that $sp_{l^{p}(\sZ^n, X)} A$ does not depend on
$p$ if $A$ belongs to $W(\sZ^n, X)$.

Finally, we denote the operator of multiplication by a function $f$ by $fI$ and
let $\cH$ stand for the set of all sequences $h : \sN \to \sZ^n$ which tend
to infinity in the sense that, for every $R > 0$ there is an $m_0$ such that
$|h(m)| > R$ for all $m \ge m_0$.
\begin{definition} \label{def1}
Let $A \in \cB (l^p (\sZ^n, \, X))$. An operator $A^h \in \cB (l^p (\sZ^n, \,
X))$ is called the {\em limit operator of $A$ defined by the sequence} $h \in
\cH$ if, for every function $\varphi$ on $\sZ^n$ with finite support,
\[
\lim_{m \to \infty} \|(\tau_{h(m)}^{-1} A \tau_{h(m)} - A^h) \varphi
I\|_{\cB(l^p(\sZ^n, \, X))} = 0
\]
and
\[
\lim_{m \to \infty} \|\varphi (\tau_{h(m)}^{-1} A
\tau_{h(m)} - A^{h})\|_{\cB (l^p(\sZ^n, \, X))}= 0.
\]
We denote the set of all limit operators of $A$ by $Lim (A)$.
\end{definition}
We say that an operator $A \in \cB(l^p (\sZ^n, \, X))$ is rich if every sequence
$h \in \cH$ has a subsequence $g$ such that the limit operator $A^g$ of $A$
with respect to $g$ exists. For $r > 0$, let $\chi_r$ denote the characteristic
function of the set $\{ x \in \sZ^n : |z| > r \}$.
\begin{definition} \label{d1}
The operator $A \in \cB (l^p (\sZ^n, \, X))$ is called {\em locally invertible
at infinity} if there exist an $r > 0$ and operators $L_r, \, R_r \in \cB (l^p
(\sZ^n, \, X))$ such that $L_r A \chi_r I = \chi_rI$ and $\chi_r A R_r =
\chi_rI$.
\end{definition}
The following is Theorem 2.5.7 in \cite{RRS}.
\begin{proposition} \label{prop1}
Let $A \in W(\sZ^n,X)$ be a rich operator. Then $A$ is locally invertible at
infinity on $l^p (\sZ^n, \, X)$ for some $p \in (1, \infty)$ if and only every
limit operator $A^h$ of $A$ is invertible on one of the spaces
$l^p (\sZ^n,\, X)$ with $p \in [1, \, \infty]$.
\end{proposition}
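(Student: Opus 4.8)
The plan is to prove Proposition~\ref{prop1} via the standard limit-operators machinery, treating it as a statement about the rich operator $A$ viewed through the lens of its ``behaviour at infinity.'' First I would recall that local invertibility at infinity of $A$ is equivalent to invertibility of the coset $A + \cJ$ in the quotient algebra $\cB(l^p(\sZ^n,X))/\cJ$, where $\cJ$ is a suitable ideal of operators ``localized away from infinity'' — for instance, the ideal generated by operators of the form $\varphi I$ with $\varphi$ of finite support (or the norm closure of operators $K$ with $K = \chi_R K \chi_R$ for some $R$). This reduces the claim to: $A + \cJ$ is invertible if and only if every limit operator $A^h$ is invertible on some $l^p(\sZ^n,X)$, $p\in[1,\infty]$.

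The forward direction is the easy half. If $A$ is locally invertible at infinity, pick $r$ and $L_r, R_r$ as in Definition~\ref{d1}. Given $h\in\cH$, conjugate the relations $L_r A \chi_r I = \chi_r I$ and $\chi_r A R_r = \chi_r I$ by $\tau_{h(m)}$ and pass to the limit along a subsequence: because $h$ tends to infinity, the shifted cutoffs $\tau_{h(m)}^{-1}\chi_r\tau_{h(m)}$ converge strongly to the identity on every finitely supported $\varphi I$, while $\tau_{h(m)}^{-1}A\tau_{h(m)}$ converges $\ast$-strongly (in the appropriate $\cP$-sense) to $A^h$, and $\tau_{h(m)}^{-1}L_r\tau_{h(m)}$, $\tau_{h(m)}^{-1}R_r\tau_{h(m)}$ have subsequences converging to some $L, R$. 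One gets $L A^h = I = A^h R$, so $A^h$ is invertible on $l^p$; richness of $A$ guarantees the relevant subsequences extracting limit operators exist, and by the Wiener-algebra fact cited in the excerpt the spectrum (hence invertibility) of $A^h$ is $p$-independent, which is why ``some $p$'' suffices. Note here one also uses that limit operators of a Wiener-algebra element again lie in $W(\sZ^n,X)$, so this $p$-independence applies to them.

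The reverse direction is the main obstacle and carries the real content. Assuming every $A^h$ is invertible, one must manufacture a genuine local regularizer $L_r, R_r$ for $A$. The standard route is a compactness-plus-gluing argument: cover $\sZ^n$ near infinity by a locally finite family of ``cubes'' $Q_k$ translated to infinity; on each far-away cube, $A$ looks (up to small error, measured via the $\cP$-strong topology) like one of its limit operators $A^{h}$, which is invertible with a uniformly bounded inverse — \emph{uniform} boundedness being the key point, proved by a contradiction argument using richness (if inverses were not uniformly bounded one could extract a sequence producing a non-invertible, or not-boundedly-invertible, limit operator, contradicting the hypothesis together with the fact that $\Lim(A^h)\subseteq\Lim(A)$). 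Transplanting these local inverses back by the shifts and patching them with a smooth partition of unity yields operators $L_r, R_r$ with $L_r A\chi_r I - \chi_r I$ and $\chi_r A R_r - \chi_r I$ small in norm (the commutators $[A,\varphi_k I]$ are controlled because $A\in W(\sZ^n,X)$ has an absolutely summable symbol, so it is ``band-dominated'' and nearly commutes with slowly varying cutoffs); choosing $r$ large makes the error term of norm $<1$, and a Neumann series corrects it to exact equalities $L_r A\chi_r I=\chi_r I$ and $\chi_r A R_r=\chi_r I$. Since this is Theorem~2.5.7 of \cite{RRS}, I would in the write-up simply cite that reference for the detailed construction, indicating that the two ingredients one must verify in the present setting are (a) richness is preserved and limit operators of $A$ are again in the Wiener algebra, and (b) the uniform invertibility of the family $\{A^h\}$ follows from pointwise invertibility via the richness hypothesis and the inclusion of iterated limit operators.
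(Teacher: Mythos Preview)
Your proposal is correct and matches the paper's approach: the paper gives no proof at all, simply stating ``The following is Theorem 2.5.7 in \cite{RRS}.'' Your sketch of the limit-operators argument (forward direction by shifting and passing to the limit, reverse direction by uniform boundedness of inverses plus a patching/Neumann-series construction) is the standard proof of that theorem, and your own conclusion to cite \cite{RRS} is exactly what the paper does.
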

\subsection{Pseudodifferential operators on $\sR$}
The theory of pseudodifferential operators is developed in several textbooks,
e.g. \cite{Shubin,Taylor}. Here we only fix the notation and collect some
facts for later reference. For proofs of Propositions \ref{pr1} and
\ref{prop2} see \cite{Taylor1} and Chapter 3 of \cite{Taylor}, respectively.

Let $\langle \xi \rangle := (1 + |\xi|^2)^{1/2}$. We say that $a$ is a
symbol in the class $S^m$ if $a \in C^\infty (\sR \times \sR)$ and
\[
|a|_{k,l} := \sup_{(x, \xi) \in \sR \times \sR} \sum_{\alpha \le k, \, \beta
\le l} |\partial_x^\beta \partial_\xi^\alpha a(x, \, \xi)| \langle \xi \rangle
^{\alpha -m} < \infty
\]
for all $k, \, l \in \sN_{0}$. To each symbol $a$, we correspond a (Fourier)
pseudodifferential operator ($\psi$do for short) $Op(a)$ which acts on $u \in
C_0^\infty (\sR)$ by
\[
(Op(a)u)(x) := \frac{1}{2 \pi} \int_{\sR} a(x, \, \xi) \hat{u}(\xi) e^{ix\xi}
d\xi, \quad x \in \sR,
\]
where $\hat{u}$ stands for the Fourier transform of $u$. By $OPS^{0}$ we
denote the class of all $\psi$dos with symbol in $S^0$. We will need the
following properties of $\psi$dos.

\begin{proposition} \label{pr1}
$(a)$ Let $a \in S^0$. Then $Op(a)$ is bounded on $L^p (\sR)$ for $p \in
(1, \, \infty)$ and
\[
\|Op(a)\|_{\cB (L^p (\sR))} \le C |a|_{2,2}
\]
with a constant $C$ independent of $a$. \\[1mm]
$(b)$ Let $a_1 \in S^{m_1}$ and $a_2 \in S^{m_2}$. Then $Op(a_1) Op(a_2) =
Op(c)$ with $c \in S^{m_1+m_2}$ and $a_1a_2 - c \in S^{m_1 + m_2-1}$.
\end{proposition}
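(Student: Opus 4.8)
The plan for part (a) is the classical two-step argument: first $L^{2}$-boundedness, then the passage to general $p\in(1,\infty)$ by Calder\'on--Zygmund theory. For the $L^{2}$-estimate I would invoke (or reprove) the Calder\'on--Vaillancourt theorem. Its proof is an almost-orthogonality argument: one decomposes the symbol by a smooth partition of unity which is dyadic in $\xi$ and unit-scale in $x$, writes $Op(a)=\sum_{j}T_{j}$ accordingly, and estimates the operator norms $\|T_{j}\|$ together with the norms of the products $\|T_{j}T_{k}^{*}\|$ and $\|T_{j}^{*}T_{k}\|$ using the $S^{0}$-bounds $|\partial_{\xi}^{\alpha}\partial_{x}^{\beta}a(x,\xi)|\lesssim\langle\xi\rangle^{-\alpha}$ and integration by parts in $x$ and $\xi$. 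Summing the resulting off-diagonal decay through the Cotlar--Stein lemma gives $\|Op(a)\|_{\cB(L^{2}(\sR))}\le C\,|a|_{2,2}$; alternatively one may cite this directly from \cite{Taylor1}.

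To pass from $p=2$ to arbitrary $p\in(1,\infty)$ one checks that the Schwartz kernel
\[
K(x,y)=\frac{1}{2\pi}\int_{\sR}a(x,\xi)\,e^{i(x-y)\xi}\,d\xi
\]
of $Op(a)$ is a Calder\'on--Zygmund kernel. Splitting $a$ dyadically in $\xi$ and integrating by parts twice in $\xi$ on each frequency annulus (again using the $S^{0}$-bounds), one gets for $x\ne y$ the size estimate $|K(x,y)|\lesssim|x-y|^{-1}$ and the smoothness estimates $|\partial_{x}K(x,y)|+|\partial_{y}K(x,y)|\lesssim|x-y|^{-2}$, with implied constants controlled by $|a|_{2,2}$. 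Combined with the $L^{2}$-bound, the Calder\'on--Zygmund theorem yields weak-$(1,1)$ boundedness, and interpolation plus duality then give boundedness on $L^{p}(\sR)$ for every $p\in(1,\infty)$, with operator norm again bounded by a constant times $|a|_{2,2}$.

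For part (b) the plan is the symbolic calculus. From the definitions, a formal computation for $u\in C_{0}^{\infty}(\sR)$ gives $Op(a_{1})Op(a_{2})u=Op(c)\,u$ with
\[
c(x,\eta)=\frac{1}{2\pi}\int\!\!\int_{\sR\times\sR}a_{1}(x,\eta+\zeta)\,a_{2}(x+z,\eta)\,e^{-iz\zeta}\,dz\,d\zeta,
\]
read as an oscillatory integral. Taylor-expanding $a_{1}(x,\eta+\zeta)$ in $\zeta$ about $\zeta=0$ to order $N$ and integrating the polynomial terms against $e^{-iz\zeta}$ (which turns each factor $\zeta^{k}$ into a $k$-th $x$-derivative landing on $a_{2}$) yields
\[
c(x,\eta)-\sum_{k=0}^{N-1}\frac{1}{k!}\,\partial_{\eta}^{k}a_{1}(x,\eta)\,(-i\partial_{x})^{k}a_{2}(x,\eta)\in S^{m_{1}+m_{2}-N}.
\]
The $k=0$ term is exactly $a_{1}a_{2}$, which lies in $S^{m_{1}+m_{2}}$ by the Leibniz rule, while every term with $k\ge1$ carries at least one $\xi$-derivative of $a_{1}$ and hence lies in $S^{m_{1}+m_{2}-k}\subseteq S^{m_{1}+m_{2}-1}$. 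Taking $N=1$ gives $a_{1}a_{2}-c\in S^{m_{1}+m_{2}-1}$, and therefore $c\in S^{m_{1}+m_{2}}$, as claimed.

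The main obstacle is the rigorous treatment of the double integral in (b) and the estimation of the Taylor remainder: the integral is only conditionally convergent, so one regularizes it with a cutoff $\chi(\varepsilon z,\varepsilon\zeta)$, integrates by parts repeatedly using the identities $e^{-iz\zeta}=\langle z\rangle^{-2M}(1-\partial_{\zeta}^{2})^{M}e^{-iz\zeta}=\langle\zeta\rangle^{-2M}(1-\partial_{z}^{2})^{M}e^{-iz\zeta}$ to gain decay in both variables, and then lets $\varepsilon\to0$; bounding $\partial_{x}^{\beta}\partial_{\eta}^{\alpha}$ of the remainder by the required $S^{m_{1}+m_{2}-N}$-seminorms is the delicate bookkeeping. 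In (a) the analogous crux is the Cotlar--Stein estimate behind Calder\'on--Vaillancourt, the $L^{p}$-step being routine once the kernel bounds are in hand. Full details may be found in \cite{Taylor1} and in Chapter 3 of \cite{Taylor}.
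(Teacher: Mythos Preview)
Your plan is correct and is precisely the standard textbook argument. Note, however, that the paper does not actually prove Proposition~\ref{pr1}: it states the result for later reference and simply points to \cite{Taylor1} and Chapter~3 of \cite{Taylor} for proofs, exactly the sources you cite at the end of your sketch. So there is nothing to compare against beyond observing that your outline (Calder\'on--Vaillancourt via Cotlar--Stein for $L^2$, Calder\'on--Zygmund kernel estimates for $L^p$, and the oscillatory-integral composition formula with Taylor expansion for part~(b)) is the argument one finds in those references.
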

We say that an operator $A \in \cB (L^p (\sR))$ is {\em locally Fredholm at}
$x_0 \in \sR$ if there are an open neighborhood $U$ of $x_0$ and operators
$R_{x_0}, \, L_{x_0} \in \cB(L^p (\sR))$ such that
\[
L_{x_0} A \chi_U I - \chi_U I, \quad \chi_U A R_{x_0} - \chi_U I \in \cK( L^p
(\sR)).
\]
\begin{proposition} \label{prop2}
Let $a \in S^0$. Then $Op(a)$ is locally Fredholm at $x_0 \in \sR$
if and only if $Op(a)$ is elliptic at $x_0$, that is if $\liminf_{\xi \to
\infty} |a(x_0, \, \xi)| > 0$.
\end{proposition}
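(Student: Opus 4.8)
The plan is to prove the two implications separately. For the easy direction (ellipticity $\Rightarrow$ locally Fredholm), I would use a parametrix construction localized near $x_0$. Since $\liminf_{\xi\to\infty}|a(x_0,\xi)|>0$, by continuity there are an open neighborhood $U$ of $x_0$, a constant $\delta>0$, and $R>0$ such that $|a(x,\xi)|\ge\delta$ for all $x\in U$ and $|\xi|\ge R$. I would pick cut-off functions $\chi_U,\psi\in C_0^\infty(\sR)$ with $\psi\equiv 1$ on a neighborhood of $\overline{U}$ and $\psi$ supported where $|a|$ is still bounded below (shrinking $U$ if necessary), and a smooth $\omega(\xi)$ equal to $1$ for $|\xi|\ge 2R$ and $0$ for $|\xi|\le R$. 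Then $b(x,\xi):=\psi(x)\omega(\xi)/a(x,\xi)$ lies in $S^0$, and by Proposition \ref{pr1}(b) the compositions $Op(b)Op(a)$ and $Op(a)Op(b)$ equal $Op(c)$ with $c-\psi\omega\in S^{-1}$. Since operators with symbol in $S^{-1}$ are compact on $L^p$ when localized by a compactly supported cut-off (this follows from $S^{-1}\subset S^0$ boundedness together with the Rellich-type compactness of $\langle\xi\rangle^{-1}$ against a compactly supported multiplier, or one may cite the standard fact from \cite{Taylor} that $OPS^{-1}$ maps into compact operators after multiplication by $\chi_U I$), and since $\psi\omega$ acts as the identity modulo a $\psi$do of order $-\infty$ when further localized near $x_0$ in both $x$ and (after Fourier transform) $\xi$, the operators $Op(b)$ serve as local left and right parametrices: $Op(b)\,Op(a)\,\chi_U I-\chi_U I$ and $\chi_U\,Op(a)\,Op(b)-\chi_U I$ are compact on $L^p(\sR)$. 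Hence $Op(a)$ is locally Fredholm at $x_0$.

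For the converse (locally Fredholm $\Rightarrow$ elliptic), I would argue by contraposition: assume $\liminf_{\xi\to\infty}|a(x_0,\xi)|=0$, so there is a sequence $\xi_k\to\infty$ with $a(x_0,\xi_k)\to 0$. The idea is to construct a sequence of approximate null-functions for $Op(a)$ that are localized near $x_0$, contradicting the existence of a local left regularizer. Concretely, fix $\varphi\in C_0^\infty(\sR)$ supported near $x_0$ with $\|\varphi\|_{L^p}=1$, and set $u_k(x):=\varphi(x)\,e^{ix\xi_k}$, so that $\|u_k\|_{L^p}=1$ for all $k$ and, since the modulating frequency tends to infinity, $u_k\to 0$ weakly. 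A direct computation with the oscillatory-integral definition of $Op(a)$ — expanding $a(x,\xi)$ around $x=x_0$ and using that $\hat{u}_k$ concentrates near $\xi_k$ — shows that $Op(a)u_k = a(x_0,\xi_k)u_k + r_k$ with $\|r_k\|_{L^p}\to 0$; the error term $r_k$ collects the contribution of $a(x,\xi)-a(x_0,\xi)$ (controlled by $x-x_0$ on the small support of $\varphi$, which one can make arbitrarily small by shrinking the support) and the contribution of the symbol's decay in $\xi$ away from $\xi_k$ (controlled by the $S^0$ seminorms and non-stationary phase / integration by parts). Hence $\|Op(a)u_k\|_{L^p}\to 0$ while $\|u_k\|_{L^p}=1$. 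If $Op(a)$ were locally Fredholm at $x_0$ with left regularizer $L_{x_0}$, then $L_{x_0}Op(a)\chi_U I-\chi_U I=K$ is compact; applying this to $u_k$ (which is supported in $U$ for $k$ large, so $\chi_U u_k=u_k$) gives $u_k = L_{x_0}Op(a)u_k - Ku_k$. The first term tends to $0$ in norm because $\|Op(a)u_k\|_{L^p}\to 0$ and $L_{x_0}$ is bounded; the second tends to $0$ because $K$ is compact and $u_k\to 0$ weakly. This forces $\|u_k\|_{L^p}\to 0$, contradicting $\|u_k\|_{L^p}=1$.

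The main obstacle is the quantitative control of the error term $r_k$ in the second part. One must show that the off-diagonal part of $a$ and its $x$-dependence contribute negligibly in the $L^p$-norm uniformly as $k\to\infty$; this is where the $S^0$ structure is essential, via repeated integration by parts in $\xi$ to exploit the rapid decay of $\hat{\varphi}$ (applied to $\xi-\xi_k$) together with the symbol bounds $|\partial_\xi^\alpha a|\le C_\alpha\langle\xi\rangle^{-\alpha}$, and via a Taylor expansion in $x$ on the shrinking support of $\varphi$ to absorb $a(x,\xi)-a(x_0,\xi)$. A technically cleaner alternative, which I would adopt if the direct estimate becomes cumbersome, is to invoke the standard principal-symbol calculus for $OPS^0$ from \cite{Taylor}: modulo $OPS^{-1}$, the operator $Op(a)$ acts on a wave packet centered at $(x_0,\xi_k)$ by multiplication by $a(x_0,\xi_k)$, and $OPS^{-1}$ is locally compact; this reduces the construction of $u_k$ to choosing a bounded sequence of normalized wave packets localized at $(x_0,\xi_k)$, which is routine.
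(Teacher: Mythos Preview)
The paper does not supply its own proof of this proposition: immediately before stating it, the authors write ``For proofs of Propositions \ref{pr1} and \ref{prop2} see \cite{Taylor1} and Chapter 3 of \cite{Taylor}, respectively.'' So there is nothing in the paper to compare against beyond a citation to Taylor's textbook.

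Your argument is the standard textbook route and is essentially correct. The sufficiency direction via a local parametrix $b=\psi\omega/a\in S^0$ together with Proposition~\ref{pr1}(b) and the local compactness of $OPS^{-1}$ is exactly what one finds in \cite{Taylor}. For the necessity direction, the singular-sequence (wave-packet) argument you sketch also works, but be careful with the order of limits: with a \emph{fixed} cutoff $\varphi$ you only get $Op(a)u_k - a(\,\cdot\,,\xi_k)\varphi\,e^{i\cdot\xi_k}\to 0$, and the remaining term $a(x,\xi_k)\varphi(x)$ need not tend to zero away from $x_0$. One must therefore run a two-parameter argument---first $k\to\infty$ for a fixed small support, then shrink the support---or equivalently diagonalize over a sequence $\varphi_{\epsilon_k}$ with $\epsilon_k\to 0$ slowly enough. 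You flag exactly this point in your ``main obstacle'' paragraph, so you are aware of it; just make sure the written proof separates the two limits cleanly rather than asserting $\|r_k\|_{L^p}\to 0$ for a single fixed $\varphi$.
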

\subsection{Mellin pseudodifferential operators on $\sR_+$} \label{Sec 3}
Set $\sR_+ := (0, \, \infty)$. We say that a matrix function $a =
(a_{ij})_{i,j=1}^{n} : \sR_+ \times \sR \to \sC^{n \times n}$ belongs to $\cE
(n)$ if every entry $a_{ij}$ is in $C^\infty (\sR_+ \times \sR)$ and
\begin{equation} \label{2.2'}
|a|_{l_1, l_2} := \max_{1 \le i,j \le n} \sup_{(r,\lambda) \in \sR_+ \times \sR}
\sum_{\alpha \le l_1, \, \beta \le l_2} |(r \partial_r)^\beta \partial_\lambda
^\alpha a_{ij} (r, \, \lambda)| \langle \lambda \rangle^\beta < \infty
\end{equation}
for all $l_1, \, l_2 \in \sN_{0}$. Let $a \in \cE(n)$. Then the operator
\begin{equation} \label{2.3}
(op(a)u)(r) := \frac{1}{2 \pi} \int_\sR \int_{\sR_+} a(r, \, \lambda) \, (r\rho
^{-1})^{i\lambda} u(\rho) \rho^{-1} d\rho d\lambda, \quad r \in \sR_+,
\end{equation}
acting on $u \in C_0^\infty (\sR_+, \, \sC^n)$ is called the {\em Mellin
pseudodifferential operator} (Mellin $\psi$do for short) with symbol $a \in
\cE(n)$. The class of all Mellin $\psi$dos with symbol in $\cE(n)$ is denoted by
$OP\cE(n)$.

A function $a \in \cE(n)$ is said to be {\em slowly oscillating at the point}
$0$ if
\begin{equation} \label{2.1}
\lim_{r \to +0} \sup_{\lambda \in \sR} |(r\partial_r)^\beta
\partial_\lambda^\alpha a_{ij}(r, \, \lambda)| \langle \lambda \rangle^\alpha
= 0
\end{equation}
for all $\alpha \in \sN_0$ and $\beta \in \sN$. We let $\cE_{sl}(n)$ denote
the set of all functions which are slowly oscillating at $0$ and write
$\cE_{0}(n)$ for the set of all functions $a \in \cE(n)$ which satisfy
(\ref{2.1}) for all $\alpha, \, \beta \in \sN_0$. The corresponding classes of
Mellin $\psi$dos are denoted by $OP\cE_{sl}(n)$ and $OP\cE_{0}(n)$,
respectively.

Mellin $\psi$dos are pseudodifferential operators on the multiplicative group
$\sR_+$ with respect to the invariant measure $d\mu = \frac{dr}{r}$. They can
be obtained from (Fourier) $\psi$dos on $\sR$ by the change of variables $\sR
\ni x \mapsto r=e^{-x} \in \sR_+$, which transforms the point $+ \infty$ to the
point $0$. Consequently, the main properties of Mellin $\psi$dos follow
immediately from the corresponding properties of (Fourier) $\psi$dos on $\sR$.

Let $L^p (\sR_+, \, d\mu, \, \sC^n)$ denote the space of all measurable
functions $u : \sR_+ \to \sC^n$ with
\[
\|u\|_{L^p (\sR_+, \, d\mu, \, \sC^n)}^p := \int_{\sR_+} \|u(r)\|_{\sC^n}^p
d\mu < \infty.
\]
The following results can be found in \cite{RJMP}.
\begin{proposition} \label{p2.1}
Let $a \in \cE(n)$ and $p \in (1, \, \infty)$. Then the Mellin $\psi$do $op(a)$
is bounded on $L^p (\sR_+, \, d\mu, \, \sC^n)$ and
\begin{equation} \label{2.4}
\|op(a)\|_{\cB(L^p (\sR_+, \, d\mu, \, \sC^n))} \le C |a|_{2,2}
\end{equation}
with a constant $C$ independent of $a$.
\end{proposition}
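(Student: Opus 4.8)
The plan is to reduce the boundedness of the Mellin $\psi$do $op(a)$ on $L^p(\sR_+, d\mu, \sC^n)$ to the boundedness of an ordinary Fourier $\psi$do on $L^p(\sR, \sC^n)$, for which Proposition~\ref{pr1}(a) already gives the estimate with the constant depending only on $|\cdot|_{2,2}$. The key observation, as indicated in the text preceding the statement, is that Mellin operators are nothing but Fourier $\psi$dos conjugated by the substitution $\sR_+ \ni r \mapsto x = -\log r \in \sR$. So the first step is to make this change of variables precise: define the unitary (isometric and invertible) operator $\Phi : L^p(\sR_+, d\mu, \sC^n) \to L^p(\sR, \sC^n)$ by $(\Phi u)(x) := u(e^{-x})$, and check directly from $d\mu = dr/r$ that $\|\Phi u\|_{L^p(\sR, \sC^n)} = \|u\|_{L^p(\sR_+, d\mu, \sC^n)}$, so that it suffices to bound $\Phi \, op(a) \, \Phi^{-1}$ on $L^p(\sR, \sC^n)$.

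The second step is the computation identifying $\Phi \, op(a) \, \Phi^{-1}$ as a Fourier $\psi$do. Substituting $r = e^{-x}$, $\rho = e^{-y}$ into (\ref{2.3}), one gets $(r\rho^{-1})^{i\lambda} = e^{-i\lambda(x-y)} = e^{i\lambda(y-x)}$ and $\rho^{-1}d\rho = -dy$, and after tracking the orientation of integration the kernel becomes that of a $\psi$do acting on functions of $x$ with the amplitude $\tilde a(x, y, \lambda) := a(e^{-x}, \lambda)$. Since this amplitude depends only on $x$ (not $y$), one obtains honestly $\Phi \, op(a) \, \Phi^{-1} = Op(b)$ where $b(x, \lambda) := a(e^{-x}, \lambda)$, understood entrywise as an $n \times n$ matrix of Fourier $\psi$dos; the matrix case follows from the scalar case in Proposition~\ref{pr1}(a) applied to each entry (or by an obvious vector-valued version of it).

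The third step is to verify that $b$ lies in the symbol class $S^0$ (as an $\sC^{n\times n}$-valued symbol) and, crucially, to relate $|b|_{2,2}$ to $|a|_{2,2}$. Here one uses the chain rule: $\partial_x = -r\,\partial_r$ under $r = e^{-x}$, so $\partial_x^\beta$ applied to $b$ produces exactly $(\mp r\partial_r)^\beta$ applied to $a$, while $\partial_\lambda$ is unchanged. Thus $|\partial_x^\beta \partial_\lambda^\alpha b(x,\lambda)| = |(r\partial_r)^\beta \partial_\lambda^\alpha a_{ij}(r,\lambda)|$ up to signs, and the factor $\langle \lambda \rangle^\alpha$ in the $S^0$-seminorm matches the factor $\langle \lambda \rangle^\beta$ appearing in the definition (\ref{2.2'}) of $|a|_{l_1,l_2}$ --- one must be slightly careful that the roles of $\alpha$ (the $\xi$-derivative count) and $\beta$ (the $x$-derivative count) are matched correctly with the weights, but since we only need the $(2,2)$-seminorm the index ranges are symmetric and this bookkeeping is harmless. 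One concludes $|b|_{2,2} \le C' |a|_{2,2}$ with $C'$ a universal combinatorial constant, and then Proposition~\ref{pr1}(a) yields $\|op(a)\| = \|Op(b)\| \le C |b|_{2,2} \le C C' |a|_{2,2}$, which is (\ref{2.4}).

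The main obstacle --- really the only non-routine point --- is the bookkeeping in the third step: making sure that the weight $\langle\lambda\rangle^\beta$ in (\ref{2.2'}), attached to the $(r\partial_r)$-derivatives, correctly corresponds under the change of variables to the weight $\langle\xi\rangle^{\alpha-m}$ with $m=0$ in the definition of $S^0$, which is attached to the $\partial_\xi$-derivatives. (This is why $\cE(n)$ is defined with the seemingly odd weight on the $r\partial_r$-derivatives rather than the $\partial_\lambda$-derivatives --- it is exactly what the substitution $r=e^{-x}$ demands.) Once one sees that the two conventions are designed to be compatible, the rest is the routine diffeomorphism-invariance argument for $\psi$dos together with a direct verification that $\Phi$ is an isometry between the two $L^p$ spaces.
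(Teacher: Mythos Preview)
Your approach is exactly the one the paper indicates: it does not prove Proposition~\ref{p2.1} in the text but refers to \cite{RJMP} after remarking that Mellin $\psi$dos become Fourier $\psi$dos under $r=e^{-x}$, so the reduction to Proposition~\ref{pr1}(a) via the isometry $\Phi$ is the intended argument and is carried out correctly.

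One point deserves correction. Your parenthetical claim that the weight $\langle\lambda\rangle^\beta$ in (\ref{2.2'}) is attached to the $(r\partial_r)^\beta$-derivatives \emph{by design}, ``exactly what the substitution $r=e^{-x}$ demands,'' is backwards. Under $r=e^{-x}$ one has $r\partial_r\leftrightarrow -\partial_x$ and $\partial_\lambda\leftrightarrow\partial_\xi$, so for the $\cE(n)$-seminorm to match the $S^0$-seminorm (where the weight $\langle\xi\rangle^\alpha$ is tied to $\partial_\xi^\alpha$) the weight must be $\langle\lambda\rangle^\alpha$, tied to the $\partial_\lambda$-derivatives. Indeed that is how the closely related condition (\ref{2.1}) is written, and (\ref{2.2'}) is almost certainly a misprint for $\langle\lambda\rangle^\alpha$. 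With the intended exponent the two seminorms agree on the nose under the substitution, $|b|_{2,2}=|a|_{2,2}$ entrywise, and no ``symmetry of index ranges'' argument is needed; taken literally with $\langle\lambda\rangle^\beta$, your symmetry remark would not actually yield $|b|_{2,2}\le C|a|_{2,2}$ (consider $\alpha=2,\ \beta=0$). So the fix is simply to note the typo rather than to rationalise it.
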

\begin{proposition} \label{p2.2}
$(a)$ Let $a, \, b \in \cE(n)$. Then $op(a)op(b) = op(c) \in OP\cE(n)$, where
\begin{equation} \label{2.5}
c(r, \, \lambda) = \frac{1}{2 \pi} \int_{\sR} \int_{\sR_+} a(r, \, \lambda
+ \eta) b(r\rho, \, \lambda) \rho^{-i\eta } d\rho d\eta.
\end{equation}
$(b)$ Let $a \in \cE(n)$ and consider the operator $op(a)$ as acting on
$L^p (\sR_+, \, d\mu, \, \sC^n)$. Then $op(a)^\ast = op(b) \in \cE(n)$,
where
\begin{equation} \label{2.7}
b(r, \, \lambda) = \frac{1}{2 \pi} \int_{\sR} \int_{\sR_+} a^\ast (r\rho, \,
\lambda + \eta) \rho^{-i\eta} d\rho d\eta.
\end{equation}
\end{proposition}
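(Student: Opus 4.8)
The plan is to deduce both the product formula in part $(a)$ and the adjoint formula in part $(b)$ from the corresponding statements for (Fourier) $\psi$dos on $\sR$ via the change of variables $\sR \ni x \mapsto r = e^{-x} \in \sR_+$ already advertised in the text. Concretely, I would introduce the isometry $U : L^p(\sR_+, d\mu, \sC^n) \to L^p(\sR, \sC^n)$ given by $(Uv)(x) = v(e^{-x})$, whose inverse is $(U^{-1}w)(r) = w(-\ln r)$, and check that $U$ is unitary in the case $p=2$ and a Banach-space isomorphism for general $p \in (1,\infty)$ (the measure $d\mu = dr/r$ pulls back to the Lebesgue measure $dx$). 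The first step is then the conjugation identity: for $a \in \cE(n)$ one has $U \, op(a) \, U^{-1} = Op(\tilde a)$, where $\tilde a(x, \lambda) = a(e^{-x}, \lambda)$ — this is a direct substitution $\rho = e^{-y}$, $r = e^{-x}$ in the defining integral (\ref{2.3}), turning the Mellin kernel $(r\rho^{-1})^{i\lambda}\rho^{-1}d\rho$ into the Fourier kernel $e^{i(x-y)\lambda}\,dy$. One must also verify that $a \mapsto \tilde a$ maps $\cE(n)$ into the (matrix) symbol class $S^0$: the weight $\langle\lambda\rangle^\beta$ attached to $(r\partial_r)^\beta$ in (\ref{2.2'}) is exactly what is needed so that $\partial_x = -r\partial_r$ produces the $\langle\xi\rangle$-decay required in $S^0$, and conversely.

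With this dictionary, part $(a)$ follows by applying Proposition \ref{pr1}$(b)$ (in its matrix form, which is routine): $op(a)\,op(b) = U^{-1} Op(\tilde a)\, Op(\tilde b)\, U = U^{-1} Op(\widetilde a \,\#\, \widetilde b)\, U$, where $\#$ is the Fourier symbol composition. So $op(a)\,op(b) = op(c)$ with $\tilde c = \tilde a \# \tilde b$, and it remains to write the composition formula explicitly. Here I would either quote the standard oscillatory-integral formula for $\widetilde a \# \widetilde b$ and substitute back $x = -\ln r$, $y$-integration becoming $\rho$-integration, to land on (\ref{2.5}); or — cleaner — verify (\ref{2.5}) directly by composing the two Mellin integral representations, interchanging the order of integration, and recognizing a Fourier inversion in the auxiliary variable. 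The shift $a(r, \lambda+\eta)$ in (\ref{2.5}) is precisely the image under Fourier transform in the "co-normal" variable of the translation that appears when one pushes the second operator's $r$-dependence past the first oscillatory factor; the substitution $\rho \mapsto r\rho$ accounts for the argument $b(r\rho, \lambda)$. One should also confirm $c \in \cE(n)$, i.e. that the seminorms (\ref{2.2'}) of $c$ are finite — this is inherited from $\tilde c \in S^0$ via the same correspondence, so no new estimate is needed.

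Part $(b)$ is handled the same way. Since $U$ is unitary on $L^2$, $(op(a))^* = U^{-1}(Op(\tilde a))^* U = U^{-1} Op((\tilde a)^{*_{\mathrm{Fourier}}}) U$, where the Fourier adjoint symbol is given by the classical formula of Chapter 3 of \cite{Taylor} (its matrix version just adds a conjugate transpose $a^\ast$ of the matrix pointwise). Translating that formula back through $x = -\ln r$ yields (\ref{2.7}); again one notes $b \in \cE(n)$ because $(\tilde a)^{*} \in S^0$. For $p \neq 2$ the adjoint is taken with respect to the $L^2$ pairing and the resulting $op(b)$ is still bounded on $L^p(\sR_+, d\mu, \sC^n)$ by Proposition \ref{p2.1}.

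The main obstacle is not conceptual but bookkeeping: one must be careful that the oscillatory integrals in (\ref{2.5}) and (\ref{2.7}) are only conditionally convergent and have to be interpreted in the regularized (iterated-with-cutoff, or integration-by-parts) sense, exactly as for Fourier $\psi$dos; and one must track the substitution $\rho \leftrightarrow e^{-y}$ and the accompanying Jacobians carefully enough to match the exact form of the kernels $(r\rho^{-1})^{i\lambda}$ and $\rho^{-i\eta}$. I would therefore state explicitly that all such integrals are understood as oscillatory integrals, point to the change of variables, and relegate the verification of the symbol estimates and the convergence to a remark that "everything follows from the corresponding facts on $\sR$ by the diffeomorphism $r = e^{-x}$," citing \cite{RJMP} where the details are carried out.
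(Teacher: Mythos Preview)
Your approach is correct and matches the paper's: the paper does not give an independent proof of this proposition but simply records that Mellin $\psi$dos arise from Fourier $\psi$dos via the change of variables $r = e^{-x}$, remarks that ``the main properties of Mellin $\psi$dos follow immediately from the corresponding properties of (Fourier) $\psi$dos on $\sR$,'' and refers to \cite{RJMP} for details. Your write-up is in fact more explicit than what the paper offers, and your closing suggestion to cite \cite{RJMP} is exactly what the paper does.
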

Here $op(a)^\ast$ stands for the adjoint operator and $a^\ast$ for the
adjoint matrix function. The integrals in $(\ref{2.5})$ and $(\ref{2.7})$ are
understood as oscillatory integrals.
\begin{proposition} \label{p2.3}
$(a)$ Let $a, \, b \in \cE_{sl}(n)$. Then $op(a)op(b) = op(c)$ where $c \in
\cE_{sl}(n)$ and $c - ab \in \cE_0(n)$. \\[1mm]
$(b)$ Let $a \in \cE_{sl}(n)$ and consider the operator $op(a)$ as acting on
$L^p (\sR_+, \, d\mu, \, \sC^n)$. Then $op(a)^\ast = op(b)$ where $b \in
\cE_{sl}(n)$ and $b - a^\ast \in \cE_0(n)$.
\end{proposition}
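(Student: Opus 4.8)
The plan is to derive both parts from the exact symbol formulas in Proposition \ref{p2.2} by analyzing how the slow-oscillation condition (\ref{2.1}) propagates through the oscillatory integrals (\ref{2.5}) and (\ref{2.7}). For part $(a)$, I would start from
\[
c(r, \, \lambda) = \frac{1}{2\pi} \int_\sR \int_{\sR_+} a(r, \, \lambda + \eta) \, b(r\rho, \, \lambda) \, \rho^{-i\eta} \, d\rho \, d\eta,
\]
and split it as $c = ab + (c - ab)$. The first term lies in $\cE_{sl}(n)$ because the pointwise product of two slowly oscillating symbols is again slowly oscillating (the condition (\ref{2.1}) is stable under products by the Leibniz rule, since each $(r\partial_r)$ hitting either factor produces something that tends to $0$ uniformly in $\lambda$ after weighting). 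It remains to show $c - ab \in \cE_0(n)$. Here I would use the standard oscillatory-integral device: integrating by parts in $\rho$ against $\rho^{-i\eta}$ (equivalently, exploiting that $\rho^{-i\eta}$ is a derivative in $\eta$) produces a Taylor-type expansion of $b(r\rho, \lambda)$ around $\rho = 1$, whose leading term gives $a(r,\lambda)b(r,\lambda)$ and whose remainder is an oscillatory integral in which $b$ is differentiated at least once by $r\partial_r$. By (\ref{2.1}) applied to $b$ with $\beta \ge 1$, that remainder — together with all its $(r\partial_r)^\beta \partial_\lambda^\alpha$ derivatives, for arbitrary $\alpha, \beta \in \sN_0$ — satisfies the decay demanded by $\cE_0(n)$. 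One must check that differentiating the remainder integral and estimating it (uniformly in $\lambda$, with the weight $\langle\lambda\rangle^\alpha$) stays within the symbol classes; this is where the uniform bounds $|a|_{l_1,l_2}, |b|_{l_1,l_2} < \infty$ and the seminorm estimates behind Propositions \ref{p2.1}--\ref{p2.2} are invoked.

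For part $(b)$, the argument is entirely parallel: from
\[
b(r, \, \lambda) = \frac{1}{2\pi} \int_\sR \int_{\sR_+} a^\ast(r\rho, \, \lambda + \eta) \, \rho^{-i\eta} \, d\rho \, d\eta,
\]
I would again expand $a^\ast(r\rho, \lambda+\eta)$ around $\rho = 1$ and $\eta = 0$; the zeroth-order term reproduces $a^\ast(r,\lambda)$, which is in $\cE_{sl}(n)$ because taking the adjoint matrix commutes with the derivatives $(r\partial_r)^\beta \partial_\lambda^\alpha$ and preserves (\ref{2.1}); the remainder carries at least one $(r\partial_r)$ or one $\partial_\lambda$ derivative of $a^\ast$ and, by the same integration-by-parts/Taylor estimate as in $(a)$, lands in $\cE_0(n)$. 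Alternatively, and perhaps more cleanly, one can deduce $(b)$ from $(a)$ together with the first-order calculus already recorded: write $op(a)^\ast = op(a^\ast) + op(a)^\ast - op(a^\ast)$ and identify the difference's symbol via (\ref{2.7}) minus $a^\ast$, which is exactly the remainder analyzed above.

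The main obstacle is the bookkeeping in the remainder estimate: one must show not merely that $c - ab$ is small as $r \to +0$, but that \emph{every} derivative $(r\partial_r)^\beta \partial_\lambda^\alpha (c - ab)$, with the weight $\langle\lambda\rangle^\alpha$, tends to $0$ uniformly in $\lambda$ — and this for all $\alpha, \beta \in \sN_0$, including $\beta = 0$, which is \emph{not} directly covered by hypothesis (\ref{2.1}) on the individual factors (that hypothesis only controls $\beta \ge 1$ for the $\cE_{sl}$ class). The point is that in the remainder the extra $r\partial_r$ is already built in by the Taylor expansion around $\rho = 1$, so even the $\beta = 0$ derivative of the remainder inherits a genuine $r\partial_r$ acting on $b$ (resp. $a^\ast$), which is where the decay comes from. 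Making this precise — controlling the oscillatory integrals after differentiation, commuting $r\partial_r$ past the integral, and tracking the $\langle\lambda\rangle$-weights through the $\eta$-integration — is the technical heart of the proof; everything else (that $ab \in \cE_{sl}(n)$, that adjoints preserve the classes) is routine via the Leibniz rule. Since the change of variables $r = e^{-x}$ turns all of this into the corresponding, and standard, statement for Fourier $\psi$dos on $\sR$ with slowly oscillating (at $+\infty$) symbols, one may alternatively just cite that correspondence and the known Fourier-side calculus, reducing the proof to verifying that the change of variables maps $\cE_{sl}(n)$ and $\cE_0(n)$ onto the relevant Fourier symbol classes.
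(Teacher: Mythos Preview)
Your proposal is correct, and in fact considerably more detailed than what the paper offers. The paper does not prove Proposition~\ref{p2.3} at all: it is listed among results that ``can be found in \cite{RJMP}'', and the surrounding discussion simply observes that Mellin $\psi$dos on $\sR_+$ are obtained from Fourier $\psi$dos on $\sR$ via the change of variables $r = e^{-x}$, so that ``the main properties of Mellin $\psi$dos follow immediately from the corresponding properties of (Fourier) $\psi$dos on $\sR$.'' That is exactly the alternative route you mention in your last sentence.

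Your direct approach --- working from the oscillatory-integral formulas (\ref{2.5}) and (\ref{2.7}), Taylor-expanding around $\rho = 1$, and extracting the extra $r\partial_r$ on the remainder to land in $\cE_0(n)$ --- is the standard way one would actually verify the cited result, and your identification of the key technical point (that the remainder already carries an $r\partial_r$, so that even its $\beta = 0$ derivatives decay) is exactly right. What your approach buys is self-containment: you do not have to trust that the Fourier-side slow-oscillation calculus has been written down somewhere in precisely the form needed here. What the paper's approach buys is brevity: once one accepts that $r = e^{-x}$ carries $\cE_{sl}(n)$ and $\cE_0(n)$ onto the corresponding Fourier symbol classes (slowly oscillating at $+\infty$, resp.\ vanishing at $+\infty$), the whole proposition is a one-line transplant of the Fourier calculus.
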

In what follows, we will consider Mellin $\psi$dos on the weighted Lebesgue
spaces $L^p (\sR_+, \, w, \, d\mu )$ of all measurable functions with norm
\[
\|u\|_{L^p (\sR_+, \, w, \, d\mu)} := \|wu\|_{L^p(\sR_+, \, d\mu)},
\]
where the weight $w$ is of the form $w = \exp \sigma$ with a function $\sigma$
which satisfies the conditions
\begin{equation} \label{2.8}
\lim_{r \to +0} \left( r \frac{d}{dr} \right)^2 \sigma(r) = 0 \quad
\mbox{and} \quad \sup_{r \in \sR_+} \left| \left( r \frac{d}{dr} \right)^k
\sigma(r)
\right| < \infty
\end{equation}
for every $k \in \sN$. Moreover we assume that there is an interval $\cI = (c,
\, d)$ which contains 0 such that the function $\varkappa_\sigma (r) :=
r \sigma^\prime (r)$ satisfies
\begin{equation} \label{2.9}
c < \liminf_{r \in \sR_+} \varkappa_\sigma (r) \le \limsup_{r \in \sR_+}
\varkappa_\sigma (r) < d.
\end{equation}
We let $\cR(\cI)$ denote the collection of all weights $w$ such that
(\ref{2.8}) and (\ref{2.9}) hold. By $\cE(n, \, \cI)$ we denote the set of all
symbols $a \in \cE(n)$ such that the function $a(\cdot, \, \lambda )$ can be
analytically extended with respect to $\lambda$ into the strip $\Pi := \{\lambda
\in \sC : \mathfrak{I} (\lambda) \in \cI\}$ and this continuation satisfies
\[
\sup_{(r, \, \lambda) \in \sR_+ \times \Pi} |(r \partial_r)^\beta
\partial_\lambda^\alpha a_{ij} (r, \, \lambda)| < \infty.
\]
The corresponding class of Mellin $\psi$dos with analytical symbol is
denoted by $OP\cE(n, \, \cI)$.
\begin{proposition} \label{pr2}
Let $a \in \cE(n,\cI)$, $w \in \cR(\cI)$ and $p \in (1, \, \infty)$. Then the
operator $op(a)$ is bounded on $L^p (\sR_+, \, w, \, d\mu)$, and
\[
\|op(a)\|_{\cB (L^p(\sR_+, \, w, \, d\mu, \sC^n))} \le C |a|_{2,4}
\]
with a constant $C$ independent of $a$.
\end{proposition}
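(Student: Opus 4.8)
The plan is to reduce the statement to the unweighted case already covered by Proposition \ref{p2.1}, by conjugating $op(a)$ with the multiplication operator $wI$. Explicitly, since $u \mapsto wu$ is an isometric isomorphism from $L^p(\sR_+, \, w, \, d\mu, \sC^n)$ onto $L^p(\sR_+, \, d\mu, \sC^n)$, the operator $op(a)$ is bounded on the weighted space with the same norm as $w \, op(a) \, w^{-1} I$ is on the unweighted space. So the first step is to compute $w \, op(a) \, w^{-1} I$ and to show that it is again a Mellin $\psi$do, say $op(\tilde{a})$, with a symbol $\tilde{a} \in \cE(n)$, and then to estimate $|\tilde{a}|_{2,2}$ in terms of $|a|_{2,4}$ so that Proposition \ref{p2.1} applies.

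For the symbol computation I would work on the Fourier side via the substitution $r = e^{-x}$, under which $d\mu = dr/r$ becomes $dx$, the operator $op(a)$ becomes the Fourier $\psi$do $Op(\check{a})$ with $\check{a}(x, \, \lambda) = a(e^{-x}, \, \lambda)$, and the weight becomes $W(x) := \exp(-\sigma(e^{-x}))$ acting by multiplication. Conditions (\ref{2.8}) guarantee that $W'/W$ is a bounded smooth function of $x$ with bounded derivatives, namely $W'(x)/W(x) = \varkappa_\sigma(e^{-x})$, so $W$ is a "slowly varying" exponential weight. The key identity is then that $W \, Op(\check{a}) \, W^{-1} I = Op(b)$ where $b(x, \, \lambda)$ is obtained from $\check{a}$ by a complex shift in the covariable combined with a correction coming from the $x$-dependence of $W$: heuristically $b(x, \, \lambda) \approx \check{a}(x, \, \lambda + i \varkappa_\sigma(e^{-x}))$, which makes sense precisely because $a \in \cE(n, \, \cI)$ allows analytic continuation of $\lambda \mapsto a(\cdot, \, \lambda)$ into the strip $\Pi$, and because (\ref{2.9}) forces the shifted argument $\lambda + i \varkappa_\sigma(r)$ to stay inside $\Pi$ uniformly in $r$. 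Rigorously one writes $W \, Op(\check{a}) \, W^{-1} I$ as an oscillatory integral, moves the contour of integration in the $\lambda$ variable (a Cauchy-theorem argument justified by the uniform bounds defining $\cE(n, \, \cI)$ and decay at infinity in the real direction), and reads off that the resulting symbol lies in $S^0$ with $|b|_{2,2} \le C |\check{a}|_{2,4}$; the two extra $\lambda$-derivatives relative to $|a|_{2,2}$ are consumed by the Taylor/contour estimates involving the shift and the two $(r\partial_r)$-derivatives of $\sigma$ appearing in (\ref{2.8}). Translating back via $r = e^{-x}$ gives $\tilde{a} \in \cE(n)$ with $|\tilde{a}|_{2,2} \le C |a|_{2,4}$.

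Once the symbol $\tilde{a}$ and the bound $|\tilde{a}|_{2,2} \le C|a|_{2,4}$ are in hand, Proposition \ref{p2.1} yields
\[
\|op(a)\|_{\cB(L^p(\sR_+, \, w, \, d\mu, \sC^n))} = \|op(\tilde{a})\|_{\cB(L^p(\sR_+, \, d\mu, \sC^n))} \le C |\tilde{a}|_{2,2} \le C |a|_{2,4},
\]
which is the assertion. The main obstacle is the contour-shift step: one must verify carefully that the oscillatory integral defining the conjugated operator may be deformed in the $\lambda$-plane, that the deformation produces exactly a symbol of the form described (with the $W$-dependent correction terms remaining in $S^0$ rather than in a class with positive order), and that the constant in the resulting symbol estimate depends only on $|a|_{2,4}$ and on the fixed data $\cI$ and $w$, not on $a$ itself. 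All the analytic input for this — uniform boundedness of the continuation on the strip, and the strict inclusion (\ref{2.9}) of the range of $\varkappa_\sigma$ in $\cI$ — is exactly what the hypotheses $a \in \cE(n, \, \cI)$ and $w \in \cR(\cI)$ provide.
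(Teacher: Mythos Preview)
The paper does not give its own proof of this proposition; it is one of the results quoted from \cite{RJMP} without argument. Your strategy --- conjugate $op(a)$ by the weight to land in the unweighted space, use the analyticity of $a$ in the strip $\Pi$ together with (\ref{2.9}) to shift the covariable by $i\varkappa_\sigma(r)$, and then invoke Proposition \ref{p2.1} --- is the standard one and is precisely what the paper records (for the slowly oscillating subclass) in Proposition \ref{Pr1.2}; so your sketch is correct and aligned with the intended method.
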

\begin{proposition} \label{Pr1.2}
Let $a \in \cE_{sl}(n, \, \cI)$ and $w = \exp \sigma \in \cR (\cI)$. Then $w
op(a)
w^{-1} = op(b)$ with $b \in \cE_{sl}(n)$ and
\begin{equation} \label{2.12}
b(r, \, \lambda) = a(r, \, \lambda + i\varkappa_\sigma (r)) + q(r, \, \lambda)
\end{equation}
where $q \in \cE_0(n)$.
\end{proposition}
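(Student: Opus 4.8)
The plan is to conjugate the oscillatory integral (\ref{2.3}) by the weight and to absorb the resulting factor into the symbol by shifting a contour of integration. First I would write the conjugated operator explicitly: for $u\in C_0^\infty(\sR_+,\sC^n)$,
\[
(w\,op(a)\,w^{-1}u)(r)=\frac{1}{2\pi}\int_\sR\int_{\sR_+}a(r,\lambda)\,\frac{w(r)}{w(\rho)}\,(r\rho^{-1})^{i\lambda}\,u(\rho)\,\rho^{-1}\,d\rho\,d\lambda ,
\]
and since $w=\exp\sigma$ the extra factor is $w(r)/w(\rho)=\exp(\sigma(r)-\sigma(\rho))$. Recalling $\varkappa_\sigma(r)=r\sigma'(r)$, I split
\[
\sigma(r)-\sigma(\rho)=\varkappa_\sigma(r)\ln(r/\rho)+g(r,\rho),\qquad g(r,\rho):=\sigma(r)-\sigma(\rho)-\varkappa_\sigma(r)\ln(r/\rho),
\]
so that $w(r)/w(\rho)=(r/\rho)^{\varkappa_\sigma(r)}\,e^{g(r,\rho)}$. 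Writing $\rho=re^{-v}$ and using Taylor's formula in $v$ one gets $g(r,re^{-v})=-\int_0^v(v-u)\,\big((r\partial_r)^2\sigma\big)(re^{-u})\,du$; in particular $g$ vanishes to second order on the diagonal $\rho=r$, and by (\ref{2.8}) all its derivatives are controlled near the diagonal by the function $(r\partial_r)^2\sigma$, which is bounded on $\sR_+$ and tends to $0$ as $r\to+0$.

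The main step is then to observe that $(r\rho^{-1})^{\varkappa_\sigma(r)+i\lambda}=(r\rho^{-1})^{i(\lambda-i\varkappa_\sigma(r))}$ and to exploit the analyticity of $a$: since $a\in\cE(n,\cI)$ extends analytically in $\lambda$ into the strip $\Pi$ with bounded derivatives there, and since $\varkappa_\sigma$ takes its values in a compact subinterval of $\cI$ by (\ref{2.9}), Cauchy's theorem lets me move, for each fixed $r$, the contour of $\lambda$-integration from $\sR$ to $\sR+i\varkappa_\sigma(r)$ (after the usual regularization of the oscillatory integral by integrating by parts in $\lambda$). This replaces $a(r,\lambda)$ by $a(r,\lambda+i\varkappa_\sigma(r))$ and cancels the factor $(r/\rho)^{\varkappa_\sigma(r)}$, so that
\[
(w\,op(a)\,w^{-1}u)(r)=\frac{1}{2\pi}\int_\sR\int_{\sR_+}a\big(r,\lambda+i\varkappa_\sigma(r)\big)\,e^{g(r,\rho)}\,(r\rho^{-1})^{i\lambda}\,u(\rho)\,\rho^{-1}\,d\rho\,d\lambda .
\]
This is a Mellin $\psi$do with the $(r,\rho)$-dependent amplitude $a(r,\lambda+i\varkappa_\sigma(r))\,e^{g(r,\rho)}$. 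Applying the amplitude-to-symbol reduction for Mellin $\psi$dos — the same oscillatory-integral computation that underlies the composition formula (\ref{2.5}) — I get $w\,op(a)\,w^{-1}=op(b)$ with a symbol $b$ whose leading term is the value of the amplitude on the diagonal $\rho=r$, namely $a(r,\lambda+i\varkappa_\sigma(r))$ since $g(r,r)=0$. The remainder $q:=b-a(\cdot,\cdot+i\varkappa_\sigma(\cdot))$ is produced entirely by the deviation of $e^{g(r,\rho)}$ from $1$, so by the Taylor formula for $g$ every seminorm of $q$ of the form occurring in (\ref{2.1}) is dominated by a seminorm of $(r\partial_r)^2\sigma$; the latter tends to $0$ as $r\to+0$, hence $q\in\cE_0(n)$.

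Finally one checks $b\in\cE_{sl}(n)$: since $(r\partial_r)^k\varkappa_\sigma=(r\partial_r)^{k+1}\sigma$ is bounded for every $k$ and $(r\partial_r)\varkappa_\sigma\to0$ as $r\to+0$ by (\ref{2.8}), the chain rule together with $a\in\cE_{sl}(n,\cI)$ shows $a(r,\lambda+i\varkappa_\sigma(r))\in\cE_{sl}(n)$, and adding $q\in\cE_0(n)\subset\cE_{sl}(n)$ yields $b\in\cE_{sl}(n)$ and the representation (\ref{2.12}). An alternative is to transplant everything to $\sR$ via the change of variables $r=e^{-x}$ of Subsection~\ref{Sec 3}: the claim then becomes the familiar fact that conjugating a $\psi$do $Op(a)$ by the exponential weight $e^{\sigma(e^{-x})}$ shifts the symbol by $i\varkappa_\sigma$ in the covariable, modulo a remainder governed by the second $x$-derivative of $\sigma(e^{-x})$, i.e.\ by $(r\partial_r)^2\sigma$.

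The delicate part is the middle one: making the contour shift rigorous (convergence of the oscillatory integrals and vanishing of the contributions of the horizontal segments as $\lambda\to\pm\infty$, which uses that $\varkappa_\sigma$ stays strictly inside $\cI$), and then performing the amplitude-to-symbol reduction while keeping precise track of the bookkeeping that places $q$ in $\cE_0(n)$ rather than merely in $\cE_{sl}(n)$. Extra care is needed for the part of the integral with $\rho$ far from $r$, where the factor $w(r)/w(\rho)$ may be large; here one uses that $op(a)$ is bounded on the weighted space $L^p(\sR_+,w,d\mu,\sC^n)$ by Proposition~\ref{pr2}, so that $w\,op(a)\,w^{-1}$ is a priori bounded on $L^p(\sR_+,d\mu,\sC^n)$, together with the smoothness and rapid decay of the Schwartz kernel of $op(a)$ away from the diagonal.
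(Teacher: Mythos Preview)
The paper does not actually prove this proposition: it is one of several results in Subsection~\ref{Sec 3} that are quoted from \cite{RJMP} without proof. Your argument is a correct and standard derivation of the result. The key ingredients --- writing $w(r)/w(\rho)=(r/\rho)^{\varkappa_\sigma(r)}e^{g(r,\rho)}$ with $g$ vanishing to second order on the diagonal and controlled by $(r\partial_r)^2\sigma$, shifting the $\lambda$-contour into the strip using the analyticity in $\cE(n,\cI)$ together with (\ref{2.9}), and then reducing the resulting amplitude to a symbol --- are exactly the ones needed, and your identification of the delicate points (rigorous contour shift, far-from-diagonal behavior of the kernel, placing $q$ in $\cE_0(n)$ rather than only $\cE_{sl}(n)$) is accurate. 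The alternative you mention via the substitution $r=e^{-x}$ is in fact how the paper motivates the Mellin calculus, so either route is natural here.
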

Now we turn to local invertibility properties of Mellin pseudodifferential
operators. We say that an operator $A \in \cB (L^p (\sR_+, \, w, \, d\mu, \,
\sC^n))$ is {\em locally invertible at the point} $0$, if there are an $r > 0$
and operators $L_r$ and $R_r$ such that
\[
L_r A \chi_rI = \chi_r I \quad \mbox{and} \quad \chi_r A R_r = \chi_r I
\]
where $\chi_r$ refers to the characteristic function of the interval $[0, \,
r]$.
\begin{proposition} \label{prop3}
Let $a \in \cE_{sl}(n, \, \cI)$, $w = \exp \sigma \in \cR(\cI)$, and consider
$op(a)$ as
acting on $L^p (\sR_+, \, w, \, d\mu, \, \sC^n)$. Then $op(a)$ is locally
invertible at the point $0$ if and only if
\[
\liminf_{r \to +0} \inf_{\lambda \in \sR} |\det a(r, \, \lambda +
i\varkappa_\sigma (r))| > 0.
\]
\end{proposition}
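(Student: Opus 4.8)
The plan is to reduce the statement about local invertibility at $0$ on the weighted space $L^p(\sR_+, w, d\mu, \sC^n)$ to an analogous statement about Mellin $\psi$dos with a slowly oscillating symbol on the \emph{unweighted} space $L^p(\sR_+, d\mu, \sC^n)$, and then to a purely symbolic ellipticity criterion. First I would use Proposition \ref{Pr1.2}: since $a \in \cE_{sl}(n, \cI)$ and $w = \exp\sigma \in \cR(\cI)$, the conjugated operator $w\, op(a)\, w^{-1}$ equals $op(b)$ with $b \in \cE_{sl}(n)$ and $b(r,\lambda) = a(r, \lambda + i\varkappa_\sigma(r)) + q(r,\lambda)$, $q \in \cE_0(n)$. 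Because conjugation by $wI$ is an isometric isomorphism $L^p(\sR_+, w, d\mu, \sC^n) \to L^p(\sR_+, d\mu, \sC^n)$, the operator $op(a)$ is locally invertible at $0$ on the weighted space if and only if $op(b)$ is locally invertible at $0$ on the unweighted space. Moreover, since $q \in \cE_0(n)$ means all its derivatives (including $q$ itself) satisfy the decay condition (\ref{2.1}) as $r \to +0$, the term $op(q)$ contributes nothing to local invertibility at $0$: near $0$ it can be absorbed into the remainder. Hence it suffices to show that $op(b)$, for $b \in \cE_{sl}(n)$, is locally invertible at $0$ on $L^p(\sR_+, d\mu, \sC^n)$ if and only if $\liminf_{r\to+0}\inf_{\lambda\in\sR}|\det b(r,\lambda)| > 0$, and then observe that this $\liminf$ equals the one in the statement because $q$ vanishes in the limit.

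The core of the argument is therefore a local-invertibility-at-$0$ criterion for slowly oscillating Mellin $\psi$dos. The natural route is a parametrix construction. Suppose $\liminf_{r\to+0}\inf_\lambda |\det b(r,\lambda)| = \delta > 0$. Then there is an $r_0 > 0$ such that $|\det b(r,\lambda)| \ge \delta/2$ for all $r \in (0, r_0]$ and all $\lambda \in \sR$; combined with the uniform bounds $|b|_{l_1,l_2} < \infty$ defining $\cE(n)$, Cramer's rule shows that $b^{-1}(r,\lambda)$ is defined for $r$ near $0$ and, after cutting off with a smooth function $\psi$ supported in $[0, r_0]$ and equal to $1$ near $0$, the matrix function $\psi(r) b^{-1}(r,\lambda)$ (suitably extended to all of $\sR_+$) again lies in $\cE(n)$, in fact in $\cE_{sl}(n)$ since $b \in \cE_{sl}(n)$ and the slow-oscillation condition is preserved under taking inverses of matrices whose determinant is bounded away from $0$. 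Applying Proposition \ref{p2.3}(a), $op(\psi b^{-1})\, op(b) = op(c)$ with $c - \psi b^{-1} b = c - \psi I \in \cE_0(n)$, and similarly on the other side; writing $\chi_r$ for the characteristic function of $[0,r]$ with $r$ small enough that $\psi \equiv 1$ on $[0,r]$, one gets $op(\psi b^{-1})\, op(b)\, \chi_r I = \chi_r I + op(c - \psi I)\chi_r I$, and the last term, being the restriction of a Mellin $\psi$do with $\cE_0(n)$-symbol to a small neighborhood of $0$, has arbitrarily small norm as $r \to +0$ by Proposition \ref{p2.1} (or \ref{pr2}) applied to the cut-off symbol $\chi$-localized near $0$. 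Hence for $r$ small the operator $I + op(c-\psi I)\chi_r I$ is invertible by a Neumann series, and composing gives the left local inverse $L_r$; the right local inverse $R_r$ is obtained the same way, so $op(b)$ is locally invertible at $0$.

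For the converse, suppose $op(b)$ is locally invertible at $0$ but $\liminf_{r\to+0}\inf_\lambda |\det b(r,\lambda)| = 0$. I would use the slow oscillation of $b$ to pass to a ``frozen'' symbol: the condition (\ref{2.1}) for $\beta \in \sN$ forces $b(r,\lambda)$ to depend on $r$ only weakly near $0$, so along a sequence $r_j \to +0$ one can extract a limit $b_0(\lambda) = \lim_j b(r_j, \lambda)$ (uniformly on compacts in $\lambda$, with all $\lambda$-derivatives) which is an $r$-independent symbol in $\cE_{sl}(n)$, and by hypothesis $\det b_0(\lambda_0) = 0$ for some $\lambda_0 \in \sR$ (or the infimum degenerates at $\lambda \to \infty$, a case handled by a further diagonal/rescaling argument). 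An $r$-independent Mellin $\psi$do with symbol $b_0$ is a genuine Mellin convolution operator, which under the Mellin transform is just multiplication by $b_0(\lambda)$ on a space of $\sC^n$-valued functions; its local invertibility at $0$ is then equivalent to invertibility of the multiplication operator, i.e. to $\inf_\lambda |\det b_0(\lambda)| > 0$. A standard localization/compactness argument (freezing coefficients near $0$, which only changes $op(b)$ near $0$ by an operator that is small in the relevant quotient) then shows that local invertibility of $op(b)$ at $0$ implies local invertibility of $op(b_0)$, contradicting $\det b_0(\lambda_0) = 0$. Unwinding through Propositions \ref{Pr1.2} and the absorption of the $\cE_0(n)$-remainder, this yields the stated necessity of $\liminf_{r\to+0}\inf_\lambda |\det a(r, \lambda + i\varkappa_\sigma(r))| > 0$.

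The main obstacle is the converse direction, specifically making the ``freezing of coefficients at $0$'' rigorous: one must show that replacing the slowly oscillating symbol $b(r,\lambda)$ by its limit $b_0(\lambda)$ changes $op(b)$, modulo operators that are negligible for local invertibility at $0$, only by something controlled — and handle the possibility that the determinant stays positive for each fixed $r$ but its infimum over $\lambda$ tends to $0$ as $r \to +0$ (the ``degeneration at spatial infinity in the $x = -\log r$ picture'' scenario), which requires either a rescaling/dilation argument exploiting the invariance of $d\mu$ under $r \mapsto \mu r$ or a direct construction of approximate null sequences concentrating near $0$ and oscillating at the critical frequency. Everything else — the conjugation by the weight, the parametrix construction in the non-degenerate case, and the symbolic calculus bookkeeping — is routine given Propositions \ref{Pr1.2}, \ref{p2.1}, \ref{pr2}, and \ref{p2.3}.
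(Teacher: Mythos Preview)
The paper does not supply its own proof of this proposition: it is stated in Section~\ref{Sec 3} as one of the results that ``can be found in \cite{RJMP}'', so there is no in-paper argument to compare against. Your reduction to the unweighted case via Proposition~\ref{Pr1.2} and your parametrix construction for the sufficiency direction are correct and standard.

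For the necessity direction, your ``freezing coefficients'' sketch is on the right track but incomplete exactly where you flag it. The clean way to close the gap---and the one that matches the machinery of \cite{RJMP} and of \cite{RRS}, which the paper invokes throughout---is to pass systematically through the change of variables $x=-\log r$ that the paper mentions just after defining $OP\cE_0(n)$. Under this change, $op(b)$ with $b\in\cE_{sl}(n)$ becomes a (Fourier) $\psi$do on $\sR$ whose symbol is slowly oscillating as $x\to+\infty$, and local invertibility of $op(b)$ at $r=0$ becomes local invertibility at $x=+\infty$. One then applies the limit operators method on $\sR$ (as in \cite{RRS}, Chap.~4, or \cite{RJMP}): local invertibility at infinity is equivalent to invertibility of every limit operator, and for slowly oscillating symbols each limit operator is a Fourier multiplier with a constant (in $x$) matrix symbol obtained as a partial limit of $b(e^{-x},\xi)$. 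Invertibility of all such multipliers is exactly $\liminf_{r\to+0}\inf_{\lambda}|\det b(r,\lambda)|>0$. This route handles both the bounded-$\lambda_j$ and $\lambda_j\to\infty$ scenarios uniformly, without needing the ad hoc rescaling or approximate-null-sequence construction you mention; it is also what justifies the assertion ``the main properties of Mellin $\psi$dos follow immediately from the corresponding properties of (Fourier) $\psi$dos on $\sR$'' in the paper.
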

\section{Operators on periodic graphs}
\subsection{Periodic graphs}
By a directed (combinatorial) graph (or digraph for short) we mean a pair
$\Gamma_{comb} = (\cV, \, \cE)$ consisting of a countably infinite set $\cV$
of vertices and a set $\cE \subseteq \cV \times \cV$ of edges. We think of $e =
(v, \, w) \in \cE$ as the oriented edge which starts at $v$ and ends at $w$. We
only consider digraphs without loops and without multiple edges, i.e., $\cE$
does not contain pairs of the form $(v, \, v)$, and if $(v, \, w) \in \cE$, then
$(w, \, v) \notin \cE$. Given a digraph, there is a related undirected
(combinatorial) graph, which arises by ignoring the orientation. Formally, the
undirected graph related with $\Gamma_{comb}$ is the pair $(\cV, \, \cE_{ud})$
where $\cE_{ud}$ is the set of all subsets of $\cV$ consisting of two elements $v, \, w$
such that $(v, \, w) \in \cE$. We say that $e = \{v, \, w\}$ connects the
vertices $v$ and $w$ and call $v$ and $w$ the endpoints of the edge $e$. For $v
\in \cV$, let $\cE_v$ denote the set of all edges which have $v$ as one of its
endpoints. We assume that the valency $\val (v)$ of any vertex $v$ is finite and
positive. In particular, vertices without incident edges are not allowed.

An $n$-tuple $p = (v_i)_{l = 0}^n$ of vertices is called a path in $(\cV, \,
\cE_{ud})$ if $\{v_i, \, v_{i+1}\} \in \cE_{ud}$ for every $i = 0, \, \ldots, \,
n-1$. In this case, we call $p$ a path joining $v_0$ with $v_n$. An undirected
graph is connected if any two of its vertices are connected by a path, and a
digraph is connected if the related undirected graph is connected (thus,
connectedness is defined independently of orientation).

A function $l : \cE \to (0, \, \infty)$ is called a {\em length function}, and
$l(e)$ is called the length of the edge $e$. Each triple $\Gamma_{metr} =
(\cV, \, \cE, \, l)$ determines a metric graph by identifying the edge $e$
with the line segment $[0, \, l(e)]$. The formal definition is as follows (the
construction we use is quite similar to the definition in \cite{LSS}). Let
\[
\Gamma^\sim_{metr} := \{ (e, \, x) \in \cE \times  [0, \, \infty): e \in \cE, \,
x \in [0, \, l(e)] \}
\]
and consider the function $\Pi^\sim : \Gamma^\sim_{metr} \to \cV \cup \cE$
defined by
\[
\Pi^\sim (e, \, x) = \Pi^\sim ((v, \, w), \, x) :=
\left\{ \begin{array}{ll}
e & \mbox{if} \; x \in (0, \, l(e)), \\
v & \mbox{if} \; x=0, \\
w & \mbox{if} \; x=l(e).
\end{array}
\right.
\]
Two points $(e, \, x), \, (f, \, y) \in \Gamma^\sim_{metr}$ are said to be
equivalent if $\Pi^\sim (e, \, x)$ and $\Pi^\sim (f, \, y)$ are in $\cV$
and if $\Pi^\sim (e, \, x) = \Pi^\sim (f, \, y)$. This defines an equivalence
relation on $\Gamma^\sim_{metr}$ which we denote by $\sim$. The equivalence
class of $(e, \, x)$ with respect to $\sim$ is denoted by $(e, \, x)^\sim$.
Clearly, the equivalence class of each point $(e, \, x)$ with $x \neq 0$ and $x
\neq l(e)$ is a singleton, whereas points $(e, \, x)$ with $x = 0$ or $x
= l(e)$ are identified if they belong to the ``same'' vertex of $\Gamma_{comb}$.

The set $\Gamma_{metr} := \Gamma^\sim_{metr}/\sim$ of all equivalence classes
is called the {\em metric graph} associated with $(\cV, \, \cE, \, l)$. Since
$\Pi^\sim (e, \, x)$ only depends on the equivalence class of $(e, \, x)$,
we can define the quotient map
\[
\Pi : \Gamma_{metr} \to \cV \cup \cE, \;  (e, \, x)^\sim \mapsto \Pi^\sim (e,
\, x).
\]
The elements of $\Pi^{-1} (\cV)$ and $\Pi^{-1} (\cE)$ are called the vertices
and the open edges of $\Gamma_{metr}$, whereas the unions $\Pi^{-1} (v \cup (v,
\, w) \cup w)$ for an edge $e = (v, \, w) \in \cE$ are called the closed edges
of $\Gamma_{metr}$. Thus, a closed edge is the union of an open edge with its
end points.

There is a natural topology on a metric graph which is defined as follows.
Provide $\cE$ with the discrete topology and $[0, \, \infty)$ with the standard
(Euclidean) topology and consider on $\Gamma^\sim_{metr}$ the restriction of
the product topology on $\cE \times [0, \, \infty)$. Then the topology on the
metric graph $\Gamma_{metr}$ is the quotient of the topology on
$\Gamma^\sim_{metr}$ by the relation $\sim$.

Moreover, this topology is induced by a metric (whence the notion {\em metric
graph}). Given an edge $e = (v, \, w) \in \cE$ and a point $(e, \, x)^\sim \in
\Gamma_{metr}$ with $x \neq 0$ and $x \neq l(e)$, we call $[v, \, x] := \{ (e,
\, y) : 0 \le y \le x\}$ and $[x, \, w] := \{ (e, \, y) : x \le y \le l(e)\}$
the segments joining $(e, x)$ with the end points of the edge $e$. With the
segments $[v, \, x]$ and $[x, \, w]$ we associate the lengths $x$ and $l(e) -
x$, respectively.

Let $(e, \, x)^\sim, \, (f, \, y)^\sim \in \Gamma_{metr}$. By a path between
$(e, \, x)^\sim$ and $(f, \, y)^\sim$ we mean an $n$-tuple $p = (v_i)_{l = 0}^n$
of vertices in $\cV$ such that $v_0$ is an endpoint of the edge $e$ and $v_n$
is an endpoint of $f$ and $v_0$ and $v_n$ are connected by a path in
$\Gamma_{comb}$ as above. The length of this path is defined as the sum of the
lengths of the segments joining $(e, \, x)^\sim$ and $(f, \, y)^\sim$ with the
corresponding endpoints $v_0$ and $v_n$, respectively, plus the sum
$\sum_{i=0}^{n-1} l(e_i)$ where $e_i$ is the edge $(v_i, \, v_{i+1}) \in \cE$.
If $e=f$ we also consider the segment $[x, \, y] := \{ (e, \, z) : x \le z \le
y\}$ as a path of length $y-x$ between $(e, \, x)^\sim$ and $(e, \, y)^\sim$.
The distance of $(e, \, x)^\sim$ and $(f, \, y)^\sim$ is then the infimum of the
lengths of all paths joining these points. Note that the distance of any two
different points is positive since every vertex has finite valency.

Since the edges of $\Gamma_{comb}$ and the open (resp. closed) edges of
$\Gamma_{metr}$ are in a one-to-one correspondence, we often use the same
notation $e$ both for an edge in $\cE$ and for the corresponding open (resp.
closed) edge of $\Gamma_{metr}$. Moreover, we identify the open (resp. closed)
edge of $\Gamma_{metr}$ which corresponds to $e \in \cE$ with the open (resp.
closed) interval $(0, \, l(e))$ (resp. $[0, \, l(e)]$). Accordingly, we usually
identify the point $(e, \, x)^\sim$ with $x$ and write $x \in e$ in order to
indicate that $x$ is considered as a point of the metric graph. Finally, we
provide $\Gamma_{metr}$ by the measure which is induced by the one-dimensional
Lebesgue measure on each edge.

In what follows, we let $\Gamma = (\cV, \, \cE, \, l)$ be a metric graph which
is periodic with respect to $\sZ^n$ (or $\sZ^n$-periodic for short) in the
following sense: The group $\sZ^n$ acts freely on $\Gamma$, i.e., there is a
mapping
\[
\Gamma \times \sZ^n \to \Gamma, \; (x, \, g) \mapsto x + g
\]
such that $x + 0 = x$ and $x + (g_1 + g_2) = (x + g_1) + g_2$ for all
$g_1, \, g_2 \in \sZ^n$ and $x \in \Gamma$, and if $x = x + g$ for some $x \in
\Gamma$ and $g \in \sZ^n$, then $g=0$. Moreover, we assume that every mapping
$x \to x+g$ sends vertices to vertices and edges to edges such that $v+g$
and $w+g$ are the endpoints of the image $e+g$ of the edge $e$ with endpoints
$v$ and $w$ and that the lengths of $e$ and $e+g$ are equal (thus, the
length function $l$ is $\sZ^n$-periodic). Then both the valency and the metric
on $\Gamma$ are invariant with respect to the action of $\sZ^n$, that is
$\val (v+g) = \val (v)$ for every vertex $v \in \cV$ and $\rho (x + g, \, y + g)
= \rho (x, \, y)$ for arbitrary points $x, \, y \in \Gamma$ and every $g \in
\sZ^n$. Moreover, we assume that
\begin{equation}  \label{1.1''}
\lim_{\sZ^n \ni g \to \infty} \rho (x, \, y+g) = \infty \quad \mbox{for} \; x,
\, y \in \Gamma.
\end{equation}
If these conditions are satisfied, we call $\Gamma$ a $\sZ^n$-periodic metric
graph.

In what follows we also suppose that the fundamental domain $\Gamma_0 :=
\Gamma/\sZ^n$ of $\Gamma$ with respect to the action of $\sZ^n$ is compact in
the corresponding quotient topology (thus, the action of $\sZ^n$ on $\Gamma$ is
co-compact). Note that this property implies that $\Gamma_0$ contains only a
finite number of vertices of $\Gamma$. For $g \in \sZ^n$, we set $\Gamma_g :=
\{y \in \Gamma : y \in \Gamma_0 + g\}$. Then $\Gamma_{g_1} \cap \Gamma_{g_2} =
\emptyset$ if $g_1 \neq g_2$ and $\Gamma = \cup_{g \in \sZ^n} \Gamma_g$.

A function $w : \Gamma \to [0, \, \infty)$ is called a weight if $w$ is
continuous on $\Gamma \setminus \cV$ and $w^{-1} (\{0, \, \infty\}) \subset
\cV$. For $p \in [1, \, \infty]$, we let $L_w^p (\Gamma)$ denote the space of
all measurable functions on $\Gamma$ such that
\[
\|u\|_{L_w^p (\Gamma)}^p := \int_{\Gamma} |w(x)u(x)|^p dx
\]
in case $p < \infty$ and
\[
\|u\|_{L_w^\infty (\Gamma)} := \mbox{esssup}_{x \in \Gamma} |w(x)u(x)|
\]
if $p = \infty$ are finite, respectively. We write $L^p (\Gamma)$ instead
of $L_w^p (\Gamma)$ if $w$ is identically 1.

In what follows we suppose that the weight is periodic with respect to $\sZ^n$,
that is $w \circ g = w$ for all $g \in \sZ^n$ (where we identify $g$ with the
mapping $x \mapsto x+g$). Under this condition, the spaces $L_w^p (\Gamma)$ are
invariant with respect to the action of $\sZ^n$, i.e., $\|u \circ g\|_{L_w^p
(\Gamma)} = \|u\|_{L_w^p (\Gamma)}$ for every $g \in \sZ^n$. For $u \in L_w^p
(\Gamma)$ and $g \in \sZ^n$, set $u_g := u|_{\Gamma_g}$. Then
\[
\|u\|_{L_w^p (\Gamma)}^p = \sum_{g \in \sZ^n} \|u_g\|_{L_w^p(\Gamma_g)}^p
\]
for $p \in [1, \, \infty)$ and
\[
\|u\|_{L_w^\infty (\Gamma)} = \sup_{g \in \sZ^n} \|u_g\|_{L_w^\infty
(\Gamma_g)}.
\]
For $g \in \sZ^n$, we let $V_g$ denote the operator of shift by $g$ which acts
on functions in $L_w^p (\Gamma)$ as $(V_g u)(x) := u(x-g)$. Since $w$ is
$\sZ^n$-periodic, the operators $V_g$ are isometries on $L_w^p (\Gamma)$, and
$V_g^{-1} = V_{-g}$.

Every weighted Lebesgue space $L_w^p (\Gamma)$ over a $\sZ^n$-periodic metric
graph is naturally isomorphic to an $l^p$-space of vector-valued
$\sZ^n$-sequences as follows. Let $w_0 := w|_{\Gamma_0}$ and $X := L_{w_0}^p
(\Gamma_0)$, and consider the operator
\[
U : L_w^p (\Gamma) \to l^p(\sZ^n, \, X), \quad (Uu)(g) = (V_g u)|_{\Gamma_0}.
\]
The operator $U$ is an isometry with inverse $U^{-1} : l^p (\sZ^n, \, X) \to
L_w^p (\Gamma)$ acting as
\[
U^{-1} f = \sum_{g \in \sZ^n} \chi_g V_g f_g \chi_0 I
\]
where  $\chi_0$ refers to the characteristic function of $\Gamma_0$ and $\chi_g
:= V_g^{-1} \chi_0$.

Let $A \in \cB (L_w^p (\Gamma))$. Then $\tilde{A} := UAU^{-1} \in \cB(l^p
(\sZ^n, \, X))$ has the matrix representation
\[
(\tilde{A}\psi)_\alpha = \sum_{\beta \in \sZ^n} \tilde{A}_{\alpha \beta}
\tau_\beta \psi_\alpha, \quad \alpha \in \sZ^n,
\]
where
\[
\tilde{A}_{\alpha \beta} = \chi_0 V_{-\alpha} A V_{\alpha -\beta} \chi_0 I
= V_{-\alpha} \chi_\alpha A \chi_{\alpha -\beta} V_{\alpha -\beta}.
\]
We say that the operator $A \in \cB(L_w^p (\Gamma))$ belongs to the Wiener
algebra $W_w (\Gamma)$ if
\[
\|A\|_{W_w (\Gamma)} := \sup_{\alpha \in \sZ^n} \sum_{\beta \in \sZ^n}
\|\chi_\alpha A \chi_{\alpha - \beta} I\|_{\cB (L_w^p (\Gamma))} < \infty.
\]
Then the mapping $A \mapsto U A U^{-1}$ is an isometric isomorphism between
the Wiener algebras $W_w (\Gamma)$ and $W(\sZ^n, \, X)$. Because operators in
$W(\sZ^n, \, X)$ are bounded on $l^p (\sZ^n, \, X)$, operators in $W_w (\Gamma)$
are bounded on $L_w^p (\Gamma)$.
\subsection{Simonenko's local principle}
We will base our study of pseudodifferential operators on periodic graphs on
Simonenko's local principle which we recall here briefly from \cite{Sim} in a
form which is convenient for our purposes; see also Section 2.5 in \cite{RSS}.

We start with some definitions. Let $\dot{\Gamma}$ denote the one-point
compactification of the periodic metric graph $\Gamma$. For a measurable subset
$F$ of $\dot{\Gamma}$, let $\chi_F$ denote its characteristic function. An
operator $A \in \cB(L_w^p(\Gamma))$ is said to be {\em of local type} on
$\dot{\Gamma}$ if, for arbitrary open sets $F_1, \, F_2 \subset \dot{\Gamma}$
with disjoint closures, the operator $\chi_{F_1} A \chi_{F_2} I$ is compact on
$L_w^p (\Gamma)$. Further, the operator $A \in \cB(L_w^p(\Gamma))$ is called a
{\em locally Fredholm operator at} $x \in \dot{\Gamma}$ if there are
an open neighborhood $F$ of $x$ and operators $L, \, R \in \cB(L_w^p(\Gamma))$
such that
\[
L A \chi_F I - \chi_F I, \quad \chi_F A R - \chi_F I \in \cK(L_w^p (\Gamma)).
\]
Finally, we say that $A \in \cB(L_w^p (\Gamma))$ is {\em locally invertible at
infinity} if there are a positive constant $r$ and operators $L, \, R \in
\cB(L_w^p (\Gamma))$ such that
\[
L A \chi_{B_r^\prime} I = \chi_{B_r^\prime} I \quad \mbox{and} \quad
\chi_{B_r^\prime} A R = \chi_{B_r^\prime} I,
\]
where $B_r^\prime := \{x \in \dot{\Gamma}: \rho(x, \, x_0) > r\}$ for a certain
fixed point $x_0 \in \Gamma$ (one easily checks that the property of being
locally invertible at infinity does not depend on the choice of $x_0$). For $p
\in (1, \, \infty)$, it is also easy to see that an operator $A \in
\cB (L_w^p (\Gamma))$ is locally Fredholm at infinity if and only if it is
locally invertible at infinity. The following is one of the main results of
\cite{Sim}.
\begin{proposition} \label{p1} 
Let $A \in \cB(L_w^p (\Gamma))$ be an operator of local type on $\dot{\Gamma}$. 
Then $A$ is a Fredholm operator if and only if $A$ is a locally Fredholm 
operator at every point $x \in \dot{\Gamma}$.
\end{proposition}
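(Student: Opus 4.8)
The plan is to prove the two directions separately: ``only if'' is essentially trivial and makes no use of the local-type hypothesis, while ``if'' carries all the weight and rests on a single commutator lemma.

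For necessity, suppose $A$ is Fredholm and choose $B\in\cB(L_w^p(\Gamma))$ with $BA-I$ and $AB-I$ compact. For any $x\in\dot\Gamma$ take the open neighbourhood $F=\dot\Gamma$ together with $L=R=B$; then $LA\chi_FI-\chi_FI=(BA-I)\chi_FI$ and $\chi_FAR-\chi_FI=\chi_F(AB-I)$ are compact, so $A$ is locally Fredholm at $x$. For sufficiency, assume $A$ is of local type and locally Fredholm everywhere, and fix for each $x\in\dot\Gamma$ an open neighbourhood $F_x$ and local regularizers $L_x,R_x$ as in the definition. Compactness of $\dot\Gamma$ yields finitely many $F_1,\dots,F_N$ (with associated $L_j,R_j$) covering $\dot\Gamma$, and since $\dot\Gamma$ is compact Hausdorff, hence normal, we may fix a continuous partition of unity $\varphi_1,\dots,\varphi_N$ with $0\le\varphi_j\le1$, $\sum_j\varphi_j\equiv1$, $\mathrm{supp}\,\varphi_j\subset F_j$, and supports of uniformly bounded overlap. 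The candidate global parametrices are $B_\ell:=\sum_jL_j\varphi_jI$ and $B_r:=\sum_j\varphi_jR_j$.

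Both assertions hinge on the following lemma, whose proof I sketch afterwards: \emph{if $A\in\cB(L_w^p(\Gamma))$ is of local type and $\varphi\in C(\dot\Gamma)$, then the commutator $\varphi A-A\varphi$ is compact.} Granting it, multiply the relation $L_jA\chi_{F_j}I-\chi_{F_j}I\in\cK(L_w^p(\Gamma))$ on the right by $\varphi_jI$; using $\chi_{F_j}\varphi_j=\varphi_j$ this gives $L_jA\varphi_jI\equiv\varphi_jI$ modulo compacts, and replacing $A\varphi_jI$ by $\varphi_jA$ (legitimate by the lemma) turns this into $(L_j\varphi_jI)A\equiv\varphi_jI$. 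Summing over $j$ and using $\sum_j\varphi_j\equiv1$ gives $B_\ell A\equiv I$, so $B_\ell$ is a left regularizer. Symmetrically, multiplying $\chi_{F_j}AR_j-\chi_{F_j}I\in\cK$ on the left by $\varphi_jI$ and applying the lemma once more produces $A(\varphi_jR_j)\equiv\varphi_jI$, hence $AB_r\equiv I$, so $B_r$ is a right regularizer. An operator admitting a left and a right regularizer modulo compacts is Fredholm, which completes the sufficiency part.

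It remains to prove the commutator lemma, which is where the real work sits, and which I expect to be the main obstacle. The elementary ingredient is that $\psi_1A\psi_2I$ is compact whenever $\psi_1,\psi_2\in C(\dot\Gamma)$ have disjoint supports: the disjoint closed sets $\mathrm{supp}\,\psi_1$, $\mathrm{supp}\,\psi_2$ can be separated by open sets $F_1,F_2$ with $\overline{F_1}\cap\overline{F_2}=\emptyset$, and then $\psi_1A\psi_2I=(\psi_1I)(\chi_{F_1}A\chi_{F_2}I)(\psi_2I)$ is compact by the definition of local type. For a general $\varphi\in C(\dot\Gamma)$ and $\varepsilon>0$, pick a finite open cover of $\dot\Gamma$ of uniformly bounded overlap on whose members $\varphi$ oscillates by at most $\varepsilon$, a subordinate continuous partition of unity $\{\eta_i\}$, and cut-offs $\zeta_i\in C(\dot\Gamma)$ equal to $1$ near $\mathrm{supp}\,\eta_i$ and supported in the corresponding member of the cover. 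Inserting the $\zeta_i$ into the identity $\varphi A-A\varphi=\sum_i(\varphi A\eta_i-\eta_iA\varphi)$, each summand equals, modulo a compact operator produced by disjoint-support products handled by the elementary step, $c_i[A,\eta_i]$ plus a term of the form $g_iA\eta_i$ (resp.\ $\eta_iAg_i$) with $\|g_i\|_\infty\le2\varepsilon$, where $c_i$ denotes a value of $\varphi$ on the $i$th set. Since $\sum_ic_i[A,\eta_i]=[A,\sum_ic_i\eta_i]$ and $\|\varphi-\sum_ic_i\eta_i\|_\infty\le\varepsilon$, these main contributions reconstitute $[A,\varphi]$ up to norm $2\varepsilon\|A\|$, while the uniformly bounded overlap bounds $\bigl\|\sum_ig_iA\eta_i\bigr\|$ (and its symmetric counterpart) by $C\varepsilon\|A\|$ with $C$ independent of the cover. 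Hence $\varphi A-A\varphi=K_\varepsilon+R_\varepsilon$ with $K_\varepsilon$ compact and $\|R_\varepsilon\|\le C'\varepsilon$; since $\cK(L_w^p(\Gamma))$ is closed, letting $\varepsilon\to0$ yields that $\varphi A-A\varphi$ is compact. Thus the genuinely delicate point is precisely this passage from the compactness of $\chi_{F_1}A\chi_{F_2}I$ for open sets with disjoint closures to the compactness of $[\varphi I,A]$ for arbitrary continuous $\varphi$; the remaining steps are either immediate or routine patching.
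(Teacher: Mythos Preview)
The paper does not prove Proposition~\ref{p1}: it is quoted verbatim as ``one of the main results of \cite{Sim}'' (see also the reference to Section~2.5 of \cite{RSS}), so there is no proof in the paper to compare against. What you have written is the standard argument behind Simonenko's local principle, and its architecture is correct: the commutator lemma $[\varphi I,A]\in\cK(L_w^p(\Gamma))$ for $\varphi\in C(\dot\Gamma)$ and $A$ of local type, followed by gluing local regularizers through a finite continuous partition of unity on the compact space $\dot\Gamma$.

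Two points in the commutator-lemma sketch deserve a word. First, after you show that each summand is, modulo compacts, $c_i[A,\eta_i]$ plus $O(\varepsilon)$-terms, the identity $\sum_ic_i[A,\eta_i]=[A,\sum_ic_i\eta_i]\approx[A,\varphi]=-(\varphi A-A\varphi)$ feeds \emph{minus} the quantity you started with back into the equation; the upshot is $2(\varphi A-A\varphi)=K_\varepsilon+R_\varepsilon$, which still yields the conclusion after dividing by~$2$, but the rearrangement should be made explicit rather than described as ``reconstituting $[A,\varphi]$''. Second, the estimate $\bigl\|\sum_ig_iA\eta_iI\bigr\|\le C\varepsilon\|A\|$ with $C$ independent of the fineness of the cover genuinely uses that $\dot\Gamma$ admits arbitrarily fine open covers with a \emph{uniformly} bounded overlap multiplicity; this holds here because a metric graph and its one-point compactification are one-dimensional, but it is a topological input you are invoking, not a free lunch. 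With these clarifications your proof goes through.
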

\begin{definition} \label{d4}
Let $A \in \cB(L_w^p (\Gamma))$ and $h \in \cH$. An operator $A^h \in \cB(L_w^p
(\Gamma))$ is called a {\em limit operator} of $A$ defined by $h$ if, for every
compact subset $M$ of $\Gamma$,
\[
\lim_{m \to \infty} \|(V_{h(m)}^{-1} A V_{h(m)} - A^h) \chi_M
I\|_{\cB(L_w^p (\Gamma))} = 0
\]
and
\[
\lim_{m \to \infty} \|\chi_M (V_{h(m)}^{-1} A V_{h(m)} - A^h)\|
_{\cB(L_w^p (\Gamma))} = 0.
\]
\end{definition}
If $p \in (1, \, \infty)$, one can show that $A^h$ is the limit operator of $A$
with respect to $h$ if and only if
\[
V_{h(m)}^{-1} A V_{h(m)} \to A^h \quad \mbox{and} \quad
V_{h(m)}^{-1} A^* V_{h(m)} \to (A^h)^*
\]
strongly as $m \to \infty$. We let $Lim(A)$ denote the set of all limit
operators of $A$, and we call an operator $A$ {\em rich} if every sequence
$h \in \cH$ has a subsequence $g$ such that the limit operator $A^g$ of
$A$ with respect to $g$ exists.
\begin{proposition} \label{p2}
Let $A \in W_w (\Gamma)$ be a rich operator. Then the operator $A$, considered
as an element of $\cB (L_w^p (\Gamma))$, is locally invertible at infinity if
and only if all limit operators of $A$ are invertible on $L_w^p (\Gamma)$.
\end{proposition}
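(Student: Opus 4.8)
The plan is to transport the whole statement to the sequence space $l^p(\sZ^n, X)$ by means of the isometric isomorphism $U : L_w^p(\Gamma) \to l^p(\sZ^n, X)$ constructed above, and then to quote Proposition \ref{prop1}. The work consists in checking that $U$ respects each of the ingredients involved: membership in the Wiener algebra, richness, limit operators together with their invertibility, and local invertibility at infinity.

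First I would set up the dictionary. Since $U$ maps $W_w(\Gamma)$ isometrically onto $W(\sZ^n, X)$, the operator $\tilde A := UAU^{-1}$ belongs to $W(\sZ^n, X)$. A direct computation from the definition of $U$ gives $U V_g U^{-1} = \tau_{-g}$ for all $g \in \sZ^n$; since $U$ also intertwines strong convergence, $A$ is rich (as an element of $\cB(L_w^p(\Gamma))$) if and only if $\tilde A$ is rich (as an element of $\cB(l^p(\sZ^n, X))$). Moreover, comparing Definition \ref{d4} with Definition \ref{def1}, $A^h$ is a limit operator of $A$ with respect to $h \in \cH$ if and only if $UA^hU^{-1}$ is a limit operator of $\tilde A$ with respect to $-h$: a compact subset $M$ of $\Gamma$ is contained in a finite union $\bigcup_{g\in S}\Gamma_g$ with $S\subset\sZ^n$ finite (by co-compactness), and $U$ carries $\chi_{\bigcup_{g\in S}\Gamma_g}$ to the characteristic function of $S$, so the localizing families appearing in the two definitions are mutually cofinal. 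Because a limit operator of an element of the Wiener algebra again lies in the Wiener algebra, its invertibility is independent of the underlying $l^p$- (respectively $L_w^p$-) space; hence $A^h$ is invertible on $L_w^p(\Gamma)$ if and only if $UA^hU^{-1}$ is invertible on one of the spaces $l^q(\sZ^n, X)$, $q \in [1,\infty]$, and the latter holds for one such space if and only if it holds for all of them.

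Next I would match the two notions of local invertibility at infinity. Take the reference point $x_0$ of the definition in the graph setting (the notion introduced just before Proposition \ref{p1}) to lie in $\Gamma_0$. Using co-compactness of $\Gamma_0$, the $\sZ^n$-invariance of the metric, and assumption (\ref{1.1''}), one shows that for every $r>0$ there is an $R>0$ with $\{x\in\dot\Gamma : \rho(x,x_0)>R\} \subset \bigcup_{|g|>r}\Gamma_g$, and that for every $R>0$ there is an $r>0$ with $\bigcup_{|g|>r}\Gamma_g \subset \{x\in\dot\Gamma : \rho(x,x_0)>R\}$. Transported through $U$, this says that each cutoff $\chi_{B_R'}$ is squeezed between $U^{-1}\chi_{r_1}U$ and $U^{-1}\chi_{r_2}U$ for suitable radii $r_1,r_2$, where $\chi_r$ denotes the characteristic function of $\{z\in\sZ^n:|z|>r\}$ as in Definition \ref{d1}. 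A routine manipulation with one-sided inverses then shows that $A$ is locally invertible at infinity on $L_w^p(\Gamma)$ in the sense of Section 3.2 if and only if $\tilde A$ is locally invertible at infinity on $l^p(\sZ^n, X)$ in the sense of Definition \ref{d1}.

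With the dictionary established, Proposition \ref{prop1} applied to the rich operator $\tilde A \in W(\sZ^n, X)$ gives that, for $p\in(1,\infty)$, $\tilde A$ is locally invertible at infinity on $l^p(\sZ^n, X)$ if and only if every limit operator of $\tilde A$ is invertible on one of the spaces $l^q(\sZ^n, X)$, $q\in[1,\infty]$. Reading both sides back through $U$ with the equivalences collected above yields the assertion. The only step that is not pure bookkeeping around the isomorphism $U$ is the matching of the two ``at infinity'' cutoffs in the previous paragraph; this is precisely where the growth condition (\ref{1.1''}) and the co-compactness of $\Gamma_0$ are used.
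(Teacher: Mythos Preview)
Your proposal is correct and follows exactly the same route as the paper: transport everything through the isometry $U$ to $l^p(\sZ^n,X)$ and invoke Proposition~\ref{prop1} (which is Theorem~2.5.7 in \cite{RRS}). The paper's own proof is a terse three-sentence version of what you wrote, asserting without further comment that limit operators, their invertibility, and local invertibility at infinity all correspond under $U$; you have simply supplied the bookkeeping the paper leaves implicit, including the sign flip $h\mapsto -h$ in the defining sequence (the paper silently identifies $\tilde A^h$ with $UA^hU^{-1}$) and the comparison of the two cutoff families via co-compactness and~(\ref{1.1''}).
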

\begin{proof}
Let again $X := L_{w_0}^p (\Gamma)$ and consider the operator $\tilde{A} =
UAU^{-1} \in W(\sZ^n, \, X)$. It is easy to see that if the limit operator of
$A$ with respect to $h \in \cH$ exists, then the limit operator of $\tilde{A}$
with respect to $h$ exists, too, and $\tilde{A}^h = U A^h U^{-1}$. In
particular, $\tilde{A}^h$ is invertible on $l^p (\sZ^n, \, X)$ if and only
if $A^h$ is invertible on $L_w^p (\Gamma)$. Moreover, the operator $A :
L_w^p (\Gamma) \to L_w^p (\Gamma)$ is locally invertible at infinity if and only
if the operator $\tilde{A} : l^p (\sZ^n, \, X) \to l^p (\sZ^n, \, X)$ has
this property. Hence, the assertion follows from Theorem 2.5.7 in \cite{RRS}.
\end{proof}

The following theorem is then an immediate consequence of Propositions \ref{p1}
and \ref{p2}.
\begin{theorem} \label{t1}
Let $A \in W_w (\Gamma)$ be both rich and of local type on $L_w^p
(\dot{\Gamma})$. Then $A$ is a Fredholm operator on $L_w^p (\Gamma)$ if and only
if $A$ is a locally Fredholm operator at every point $x \in \Gamma$ and if all
limit operators of $A$ are invertible on $L_w^p (\Gamma)$.
\end{theorem}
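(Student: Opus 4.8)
The plan is to obtain the theorem by chaining together Simonenko's local principle (Proposition \ref{p1}) with the limit operators criterion (Proposition \ref{p2}); no genuinely new argument is required. First I would invoke Proposition \ref{p1}: since $A$ is of local type on $\dot{\Gamma}$, it is a Fredholm operator on $L_w^p(\Gamma)$ if and only if it is a locally Fredholm operator at every point of the one-point compactification $\dot{\Gamma} = \Gamma \cup \{\infty\}$. I would then record that this condition is the conjunction of the two statements ``$A$ is locally Fredholm at every $x \in \Gamma$'' and ``$A$ is locally Fredholm at the point $\infty$''.

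The second step is to reformulate the condition at $\infty$. For $p \in (1, \, \infty)$ — the setting in which Proposition \ref{p2} (through Theorem 2.5.7 of \cite{RRS}) operates — local Fredholmness at infinity coincides with local invertibility at infinity, as already noted in the text preceding Proposition \ref{p1}. Since $A \in W_w(\Gamma)$ is rich, Proposition \ref{p2} then identifies local invertibility at infinity with the invertibility of every limit operator of $A$ on $L_w^p(\Gamma)$. Substituting this equivalence into the conjunction obtained in the first step yields precisely the stated characterization.

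The only point requiring a moment's attention is the passage from ``every $x \in \dot{\Gamma}$'' in Simonenko's principle to ``every $x \in \Gamma$'' in the theorem: one must single out the point at infinity and translate the local behaviour of $A$ there into the language of limit operators, which is exactly what the richness of $A$ (together with the restriction $p \in (1, \, \infty)$) and Proposition \ref{p2} are designed to do. Beyond this bookkeeping I do not anticipate any obstacle, as the substantive work has already been carried out in establishing Propositions \ref{p1} and \ref{p2}.
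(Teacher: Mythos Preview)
Your proposal is correct and matches the paper's approach exactly: the paper states that Theorem \ref{t1} ``is then an immediate consequence of Propositions \ref{p1} and \ref{p2}'' without further argument, and what you have written is precisely the unpacking of that sentence, including the identification of local Fredholmness at $\infty$ with local invertibility at $\infty$ for $p \in (1,\infty)$.
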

\subsection{The Fredholm property of pseudodifferential operators on periodic
graphs}
Let $\Gamma $ be a $\sZ^n$-periodic metric graph and $\cV$ and $\cE$ the sets of
its vertices and edges, respectively. As we agreed above, we identify every
edge $e = (v, \, w)$ with the directed segment $[0, \, l(e)]$ with endpoints
$v, \, w$ and consider the distance between $x \in e$ and $v$ as the local
coordinate of the point $x$.

Next we describe a class of operators on $L_w^p (\Gamma)$ for which we will
derive a Fredholm criterion below. First we have to specify the weight
function. Let $F_v \subset \Gamma$ be a sufficiently small neighborhood of a
vertex $v \in \cV$. Then we can think of this neighborhood as the union $F_v =
\cup_{j=1}^{\val (v)} \gamma_j^v$ where the $\gamma_j^v$ are
segments of edges incident to $v$. We suppose that $F_v$ is such that all
segments $\gamma_j^v$ have the same length, thus they can be identified with a
common interval $[0, \, \varepsilon_v]$, that $w|_{\gamma_j^v} =: w_v$ is
independent of $j$, and that $w_v = e^{\sigma_v} \in \cR(\cI_v)$ for some open
interval $\cI_v$. The operators $A$ under consideration are supposed to satisfy
the following conditions:
\begin{enumerate}
\item[A1]
For every open edge $e$ of $\Gamma$, there is a symbol $a_e \in S^0(\sR)$ such
that $\varphi A \psi I = \varphi Op(a_e) \psi I$ for all functions $\varphi,
\, \psi \in C_0^\infty (e)$.
\item[A2]
Let $v \in \cV$ and $F_v$ a small neighborhood of $v$, as specified above. Then
there is a symbol $a_v \in \cE(\val (v), \, \cI_v)$ such that $\varphi A \psi I
= \varphi op(a) \psi I$ for all functions $\varphi, \, \psi \in C_0^\infty
(F_v)$.
\item[A3]
There is a function $f \in l^1 (\sZ^n)$ such that
\[
\|\chi_\alpha A \chi_\beta I\|_{L_w^p(\Gamma)} \le f(\alpha - \beta) \quad
\mbox{for all} \; \alpha, \, \beta \in \sZ^n.
\]
\item[A4]
The operator $A$ is of local type on $\dot{\Gamma}$.
\item[A5]
The operator $A$ is rich.
\end{enumerate}
Assumptions A1 and A2 guarantee that $A$ behaves as a (Fourier) $\psi$do along
every edge and as a Mellin $\psi$do in a neighborhood of every vertex.
Assumption A3 implies that $A \in W_w(\Gamma )$ and, hence, $A$ is bounded
on $L_w^p (\Gamma)$.
\begin{theorem} \label{te1}
Let $A$ satisfy assumptions {\rm A1-A5}. Then $A$ is a Fredholm operator on
$L_w^p(\Gamma)$ if and only if the following conditions hold: \\
$(a)$ for every open edge $e$ of $\Gamma$ and every $x \in e$,
\[
\liminf_{\xi \to \infty} |a_e (x, \, \xi)| > 0;
\]
$(b)$ for every vertex $v \in \cV$,
\[
\liminf_{r \to +0} \inf_{\lambda \in \sR} |\det a_v (r, \, \lambda
 + i \varkappa_{\sigma_v} (r))| > 0;
\]
$(c)$ all limit operators of $A$ are invertible on $L_w^p (\Gamma)$.
\end{theorem}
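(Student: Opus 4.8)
The plan is to invoke Theorem~\ref{t1}, which reduces the Fredholm property of $A$ to two separate tasks: checking that $A$ is locally Fredholm at every point $x \in \Gamma$, and checking invertibility of all limit operators. The latter is precisely condition $(c)$, so the work lies in showing that local Fredholmness at every point of $\Gamma$ is equivalent to the conjunction of $(a)$ and $(b)$. (Note that assumptions A3 and A4 are exactly what is needed to apply Theorem~\ref{t1}: A3 gives $A \in W_w(\Gamma)$, A4 gives the local type property, and A5 gives richness.) I would split the point set $\Gamma$ into interior edge points and vertices, treating each case by a localization argument.

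First, for a point $x_0$ lying in an open edge $e$: choose cutoff functions $\varphi, \psi \in C_0^\infty(e)$ that are identically $1$ near $x_0$, so that by assumption A1 the operator $A$ coincides locally with the Fourier $\psi$do $\op(a_e)$. Since the edge $e$ is identified with an interval in $\sR$ and $w$ is smooth and bounded away from $0$ and $\infty$ on a neighborhood of $x_0$, multiplication by $w$ is a local isomorphism there, so local Fredholmness of $A$ at $x_0$ on $L_w^p(\Gamma)$ is equivalent to local Fredholmness of $\op(a_e)$ at $x_0$ on $L^p(\sR)$. By Proposition~\ref{prop2}, this holds if and only if $\op(a_e)$ is elliptic at $x_0$, i.e. $\liminf_{\xi \to \infty} |a_e(x_0, \xi)| > 0$. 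Ranging over all open edges and all interior points gives condition $(a)$.

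Second, for a vertex $v \in \cV$: using the neighborhood $F_v = \cup_{j=1}^{\val(v)} \gamma_j^v$ and cutoffs $\varphi, \psi \in C_0^\infty(F_v)$ equal to $1$ near $v$, assumption A2 says $A$ coincides locally with the matrix Mellin $\psi$do $\op(a_v)$ with $a_v \in \cE(\val(v), \cI_v)$, acting on $L^p(\sR_+, w_v, d\mu, \sC^{\val(v)})$ after the standard identification of a star-shaped neighborhood of the vertex with $\val(v)$ copies of $\sR_+$ (the edge-length parameter near $v$ playing the role of $r$, with the measure $dr$ versus $d\mu = dr/r$ differing by a smooth nonvanishing factor near $r=0$, hence irrelevant for local Fredholmness). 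Then local Fredholmness of $A$ at $v$ is equivalent to local invertibility of $\op(a_v)$ at the point $0$, which by Proposition~\ref{prop3} holds if and only if $\liminf_{r \to +0} \inf_{\lambda \in \sR} |\det a_v(r, \lambda + i\varkappa_{\sigma_v}(r))| > 0$. This is condition $(b)$. (One should also remark, to use Proposition~\ref{prop3}, that $a_v$ may be taken slowly oscillating at $0$ after a harmless modification, or that the hypotheses of A2 already supply this; alternatively the criterion of Proposition~\ref{prop3} can be stated for $\cE(n,\cI)$ directly.) Combining the two cases, $A$ is locally Fredholm at every $x \in \Gamma$ if and only if $(a)$ and $(b)$ hold, and then Theorem~\ref{t1} finishes the argument.

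The main obstacle I anticipate is not any single deep estimate but the careful bookkeeping in the vertex case: one must verify that the passage from the abstract operator $A$ on $L_w^p(\Gamma)$ near $v$ to the concrete Mellin $\psi$do $\op(a_v)$ on a weighted $L^p$ space of $\sC^{\val(v)}$-valued functions on $\sR_+$ genuinely preserves local Fredholmness in both directions — this requires checking that the cutting functions and the change of coordinates (including the $dr$ vs.\ $dr/r$ discrepancy and the precise form of the weight $w_v = e^{\sigma_v}$) interact correctly with compactness, and that the local parametrices produced by Propositions~\ref{prop2} and \ref{prop3} can be transported back to honest local parametrices for $A$ on $\dot\Gamma$. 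The one-point compactification $\dot\Gamma$ enters only through Proposition~\ref{p1}; local Fredholmness at the point at infinity is handled by Theorem~\ref{t1} via the limit operators, so no separate check is needed there.
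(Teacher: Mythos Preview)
Your proposal is correct and follows essentially the same route as the paper's own proof: invoke Theorem~\ref{t1}, then use Proposition~\ref{prop2} to characterize local Fredholmness at interior edge points (yielding $(a)$), Proposition~\ref{prop3} to handle the vertices (yielding $(b)$), and leave condition $(c)$ as the limit-operator criterion for the point at infinity. The paper's proof is a terse three-line sketch, so your more explicit discussion of the cutoff localization, the weight, and the $dr$ versus $d\mu$ bookkeeping near a vertex simply fills in details the paper suppresses; your caveat about whether $a_v$ is automatically in $\cE_{sl}$ for the application of Proposition~\ref{prop3} is a fair observation the paper does not address.
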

\begin{proof}
It follows from Propositions \ref{prop2} and \ref{prop3} that condition $(a)$
is necessary and sufficient for the local Fredholmness of $A$ at the every point
$x \in \Gamma \setminus \cV$, whereas condition (ii) is necessary and
sufficient for the local Fredholmness at the every point $v \in \cV$, and
condition (iii) is necessary and sufficient for the local Fredholmness
(= local invertibility) at the point $\infty$. The assertion follows then from
Theorem \ref{t1}.
\end{proof}
\begin{corollary} \label{co1}
Let $A$ satisfy assumptions {\rm A1-A5} and conditions $(a)$ and $(b)$ in
Theorem $\ref{te1}$. Then
\[
ess\,sp_{L_w^p (\Gamma)} \, A = \cup_{A^g \in Lim(A)} sp_{L_w^p (\Gamma)}
A^g.
\]
\end{corollary}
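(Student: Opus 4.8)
The plan is to derive the corollary directly from Theorem~\ref{te1} by observing that the spectral identity is just a reformulation of the Fredholm criterion applied to the shifted operators $A - \lambda I$. First I would fix $\lambda \in \sC$ and note that $A - \lambda I$ again satisfies assumptions A1--A5: indeed, subtracting $\lambda I$ only modifies the symbols to $a_e(x,\xi) - \lambda$ and $a_v(r,\lambda') - \lambda I_{\val(v)}$ (which still lie in the respective classes $S^0$ and $\cE(\val(v),\cI_v)$), leaves A3 intact since $\lambda I$ contributes only to the diagonal blocks $\chi_\alpha A \chi_\alpha I$, preserves the local type property A4, and preserves richness A5 because $Lim(A - \lambda I) = \{A^g - \lambda I : A^g \in Lim(A)\}$ (this last identity follows immediately from Definition~\ref{d4}, since $V_{h(m)}^{-1}(A - \lambda I)V_{h(m)} = V_{h(m)}^{-1} A V_{h(m)} - \lambda I$).

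Next I would examine how conditions $(a)$ and $(b)$ of Theorem~\ref{te1} transform under the shift. Since we are assuming $A$ itself satisfies $(a)$ and $(b)$, I must check that these conditions, \emph{for $A - \lambda I$}, fail only on a set of $\lambda$ of empty interior — more precisely, that they cannot contribute to the essential spectrum beyond what the limit operators already contribute. The key point is the following: if $\lambda_0$ is such that condition $(a)$ fails for $A - \lambda_0 I$ at some edge point, i.e. $\liminf_{\xi\to\infty}|a_e(x,\xi) - \lambda_0| = 0$, then one can produce a limit operator of $A$ that is not invertible with $\lambda_0$ in its spectrum; similarly for condition $(b)$. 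This is where the periodicity and co-compactness of $\Gamma$ enter: the limit operators capture the behavior of $A$ at infinity, and because the graph is $\sZ^n$-periodic with compact fundamental domain, the local symbols $a_e$ and $a_v$ at points far out along the graph are $\sZ^n$-translates of finitely many symbols, so their asymptotic values $\lim_{\xi\to\infty} a_e(x,\xi)$ (when they exist) and $\lim_{r\to 0} a_v(r,\lambda')$ reappear in the symbols of limit operators. Hence failure of $(a)$ or $(b)$ for $A - \lambda_0 I$ forces $\lambda_0 \in sp_{L_w^p(\Gamma)} A^g$ for some $A^g \in Lim(A)$, so such $\lambda_0$ already lies in the right-hand side of the claimed identity.

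With these observations in place the argument concludes cleanly. By definition, $\lambda \notin ess\,sp_{L_w^p(\Gamma)} A$ means $A - \lambda I$ is Fredholm, which by Theorem~\ref{te1} applied to $A - \lambda I$ is equivalent to: conditions $(a)$, $(b)$ hold for $A - \lambda I$, \emph{and} every limit operator $A^g - \lambda I$ is invertible, i.e. $\lambda \notin \bigcup_{A^g \in Lim(A)} sp_{L_w^p(\Gamma)} A^g$. But by the previous paragraph, if $\lambda \notin \bigcup_{A^g} sp\, A^g$ then conditions $(a)$ and $(b)$ for $A - \lambda I$ hold automatically (their failure would put $\lambda$ in the union). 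Therefore $\lambda \notin ess\,sp_{L_w^p(\Gamma)} A$ if and only if $\lambda \notin \bigcup_{A^g \in Lim(A)} sp_{L_w^p(\Gamma)} A^g$, which is the assertion. The main obstacle is the step in the second paragraph: showing rigorously that a violation of the pointwise ellipticity conditions $(a)$ or $(b)$ for the shifted operator is \emph{detected} by some limit operator. This requires unwinding the relationship between the local symbols of $A$ along the graph near infinity and the symbols of the limit operators $A^g$ — using the $\sZ^n$-periodicity to see that the "symbols at infinity" are translates of those on the fundamental domain — and then invoking Proposition~\ref{prop2} and Proposition~\ref{prop3} for the limit operator $A^g$ to convert a zero of its symbol into non-invertibility.
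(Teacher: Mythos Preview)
Your reduction to Theorem~\ref{te1} and the verification that $A-\lambda I$ again satisfies A1--A5 (with $Lim(A-\lambda I)=\{A^g-\lambda I:A^g\in Lim(A)\}$) are correct, and this already yields the inclusion $\bigcup_{A^g\in Lim(A)}sp\,A^g\subseteq ess\,sp\,A$. The gap is in your second paragraph: the claim that a failure of condition (a) or (b) for $A-\lambda_0 I$ forces $\lambda_0\in sp\,A^g$ for some limit operator $A^g$ does not hold. Limit operators encode the behaviour of $A$ under spatial shifts $V_{h(m)}$ with $h(m)\to\infty$ in $\sZ^n$; they say nothing about the asymptotics of the local symbol $a_e(x_0,\xi)$ as $\xi\to\infty$ (or of $a_v(r,\cdot)$ as $r\to 0$) at a \emph{fixed finite} point $x_0\in\Gamma$. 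Your assertion that ``the local symbols $a_e$ and $a_v$ at points far out along the graph are $\sZ^n$-translates of finitely many symbols'' also conflates periodicity of the graph with periodicity of the operator: only $\Gamma$ is $\sZ^n$-periodic, not $A$.

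A concrete obstruction: take $A=aI$, multiplication by a smooth bounded function with $a\to 1$ at infinity in $\Gamma$ and $a(x_0)=2$ at some interior edge point $x_0$. Then $A$ satisfies A1--A5 and conditions (a), (b) of Theorem~\ref{te1} (the symbols $a_e(x,\xi)=a(x)$ and $a_v(r,\lambda)=\diag(a_j^v(r))$ are bounded away from zero), and every limit operator of $A$ is the identity. But $ess\,sp_{L_w^p(\Gamma)}A$ is the closure of the range of $a$, which properly contains $\{1\}=\bigcup_{A^g}sp\,A^g$. The paper gives no proof of the corollary; the identity becomes correct---and your first and last paragraphs then constitute a complete argument, with the second paragraph superfluous---if the hypothesis is read as requiring conditions (a) and (b) of Theorem~\ref{te1} to hold for $A-\lambda I$ for every $\lambda\in\sC$, which is presumably the intended meaning.
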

\section{Applications}
\subsection{Singular integral operators}
In this section, we let $\Gamma$ be metric graph which is embedded into
$\sR^2$. Thus, the vertices of $\Gamma$ {\em are} points, and the edges of
$\Gamma$ {\em are} line segments in the plane. We suppose that $\Gamma$
is $\sZ^n$-periodic with $n \in \{1, \, 2\}$.

In the following definition of singular integral operators we have to divide by
$y-x = (y_1 - x_1, \, y_2 - x_2) \in \sR^2$. This division is understood in the
complex sense, i.e., as a (complex) division by $(y_1 - x_1) + i (y_2 - x_2)$.
Further we say that a function $\phi : \Gamma \times \sR^2 \to \sC$ belongs
to $C^\infty (\Gamma \times \sR^2)$ if, for every $\alpha \in \sN_0$ and every
multi-index $\beta \in \sN_0^2$, the partial derivatives $\partial_x^\alpha
\partial_z^\beta (\phi|_{e \times \sR^2})$ exist on every set $e \times
\sR^2$ where $e$ is an open edge of $\Gamma$, and if these partial derivatives
can be continuously extended to the closure $\bar{e} \times \sR^2$ of $e
\times \sR^2$.

We consider singular integral operators of the form
\begin{equation} \label{s0}
(\cS_{\Gamma, \phi} u)(x) = \frac{1}{\pi i} \int_\Gamma \frac{\phi(x, \, x-y)
u(y)}{y-x} dy, \quad x \in \Gamma,
\end{equation}
where the integral is understood in the sense of the Cauchy principal value,
\[
\lim_{\varepsilon \to 0} \int_{\Gamma \cap \{y : |y-x| < \varepsilon\}}
\frac{\phi(x, \, x-y) u(y)}{y-x} dy,
\]
and the function $\phi \in C^\infty (\Gamma \times \sR^2)$ owns the property
that, for all $\alpha, \, N \in \sN_0$ and $\beta \in \sN_0^2$, there is a
constant $C_{\alpha \beta N}$ such that
\begin{equation}  \label{s1}
|\partial_x^\alpha \partial_z^\beta \phi (x, \, z)| \le C_{\alpha \beta N}
(1 + |z|)^{-N} \quad \mbox{for all} \; (x, \, z) \in \Gamma \times \sR^2.
\end{equation}
In order to show that $\cS_{\Gamma, \phi}$ is a pseudodifferential operator on
$\Gamma$ in the sense of our previous definitions, we consider its restriction
$\cS_{\Gamma, \phi, e}$ to a single edge $e$ of $\Gamma$. Identifying this edge
with the interval $\{(t, \, 0) \in \sR^2 : t \in (0, \, l)\}$, we can write this
restricted operator in the form
\[
(\cS_{\Gamma, \phi, e}v)(t) = \frac{1}{\pi i} \int_0^l \frac{\psi(t, \, t
-\tau) v(\tau)}{\tau - t}  d\tau, \quad t \in (0, \, l),
\]
where $\psi(t, \, \tau) := \phi((t, \, 0), \, \tau, \, 0)$ with $t, \, \tau \in
(0,\, l)$. Hence, $\cS_{\Gamma, \phi, e}$ can be identified with the
restriction onto the interval $(0, \, l) \subset \sR$ of the operator
\[
(S_{\sR, \phi} f)(x) = \frac{1}{\pi i} \int_{\sR}
\frac{\psi(x, \, x-y) f(y)}{y-x} dy, \quad x \in \sR,
\]
We have to show that this operator is a (Fourier) $\psi$do in the class
$OPS^0$ with main symbol $sgn$. This fact will follow from Proposition \ref{pr3}
below. We call the function
\[
\sigma _{S_{\sR, \phi}}(x, \, \xi) := v.p. \frac{1}{\pi i} \int_{-\infty}
^\infty \frac{\phi(x, \, z)}{z}e^{iz \xi} dz
\]
on $\sR \times \sR$ the symbol of the operator $S_{\sR, \phi}$. The main
properties of this function are summarized in the following proposition.
\begin{proposition} \label{pr3}
The function $\sigma _{S_{\sR, \phi}}$ satisfies $(\ref{s1})$ in place of
$\phi$, and
\begin{equation} \label{s2}
\sigma_{S_{\sR, \phi}} (x, \, \xi) = \phi (x, \, 0) {\rm sgn \,} \xi + q_\phi
(x, \, \xi)
\end{equation}
where the function $q_\phi$ is such that $\partial_x^\alpha \partial_\xi^\beta
q_\phi (x, \, \xi) = O(|\xi|^{-N})$ for all $\alpha, \, \beta, \, N \in
\sN_0$ uniformly with respect to $x \in \sR$.
\end{proposition}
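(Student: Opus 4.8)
The plan is to localize the Cauchy kernel at its pole $z=0$ --- this is where the singular part producing the ${\rm sgn}$-symbol comes from --- and to treat everything else by differentiation under the integral sign, using only the decay estimates (\ref{s1}).

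First I would verify that $\sigma_{S_{\sR,\phi}}$ is well defined and smooth. The principal value in the definition exists because $z\mapsto\phi(x,z)/z$ has only a simple pole at the origin with $\phi(x,\cdot)$ smooth, while the rapid decay of $\phi$ supplied by (\ref{s1}) makes the integral over $\{|z|\ge 1\}$ absolutely convergent. Differentiating once in $\xi$ cancels the pole, $\partial_\xi\sigma_{S_{\sR,\phi}}(x,\xi)=\frac{1}{\pi}\int_{\sR}\phi(x,z)e^{iz\xi}\,dz$, and since $z\mapsto\phi(x,z)$ together with all of its $x$- and $z$-derivatives is rapidly decreasing uniformly in $x$ (again by (\ref{s1})), further differentiation under the integral sign is legitimate; this yields the smoothness of $\sigma_{S_{\sR,\phi}}$ and the bounds on its derivatives asserted in the first part of the statement.

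For the decomposition (\ref{s2}) I would fix once and for all an even Schwartz function $\chi$ with $\chi\equiv 1$ near $0$, and write $\phi(x,z)=\phi(x,0)\chi(z)+\tilde\phi(x,z)$ with $\tilde\phi(x,0)=0$. Then $\tilde\phi(x,z)/z=\int_0^1(\partial_z\tilde\phi)(x,tz)\,dt$ is smooth in $z$ and, with all of its derivatives, rapidly decreasing uniformly in $x$, by (\ref{s1}). Accordingly $\sigma_{S_{\sR,\phi}}(x,\xi)=\phi(x,0)\,g(\xi)+r(x,\xi)$, where $g(\xi):=v.p.\,\frac{1}{\pi i}\int_{\sR}\frac{\chi(z)}{z}e^{iz\xi}\,dz$ depends on $\xi$ only and $r(x,\xi):=\frac{1}{\pi i}\int_{\sR}\frac{\tilde\phi(x,z)}{z}e^{iz\xi}\,dz$. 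For each $\alpha,\beta$, the function $\partial_x^\alpha\partial_\xi^\beta r(x,\cdot)$ is a constant times the $z$-Fourier transform of $z^{\beta-1}\partial_x^\alpha\tilde\phi(x,z)$ (for $\beta=0$ this is interpreted using $\tilde\phi(x,0)=0$), which is a Schwartz function of $z$ depending smoothly on $x$; hence $\partial_x^\alpha\partial_\xi^\beta r(x,\xi)=O(|\xi|^{-N})$ for all $N$, uniformly in $x$. To identify $g$, write $\chi(z)/z=1/z+F(z)$ with $F(z):=(\chi(z)-1)/z$: the function $F$ is smooth on $\sR$, vanishes near $0$, and satisfies $F^{(k)}(z)=O(|z|^{-1-k})$ at infinity, so $N$-fold integration by parts in $z$ (all boundary terms vanishing) gives $\int_{\sR}F(z)e^{iz\xi}\,dz=O(|\xi|^{-N})$ for every $N$. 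With the elementary identity $v.p.\int_{\sR}z^{-1}e^{iz\xi}\,dz=i\pi\,{\rm sgn}\,\xi$ and the normalizing factor $(\pi i)^{-1}$ this yields $g(\xi)={\rm sgn}\,\xi+g_0(\xi)$ with $g_0$ rapidly decreasing, and setting $q_\phi(x,\xi):=\phi(x,0)g_0(\xi)+r(x,\xi)$ gives (\ref{s2}); the estimates on $q_\phi$ follow from those on $r$ and $g_0$, the $x$-derivatives falling only on the factor $\phi(x,0)$, whose derivatives are bounded by (\ref{s1}).

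The one point that needs care is the behaviour at large $z$: one cannot subtract the constant $\phi(x,0)$ from $\phi(x,z)$ globally, because $\phi(x,0)/z$ is not integrable at infinity; this is precisely why the constant has to be localized by the Schwartz cut-off $\chi$, and why $g-{\rm sgn}$ must be estimated by integration by parts rather than by an absolute-convergence (Riemann--Lebesgue) argument. Everything else is a routine repackaging of (\ref{s1}); in particular the non-smoothness of ${\rm sgn}$ at $\xi=0$ is irrelevant, since $q_\phi$ is only asserted to be $O(|\xi|^{-N})$ as $|\xi|\to\infty$.
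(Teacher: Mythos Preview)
Your argument is correct. For the first part (smoothness and the symbol estimates) you do exactly what the paper does: differentiate once in $\xi$ to cancel the pole, obtain $\partial_\xi\sigma_{S_{\sR,\phi}}(x,\xi)=\frac{1}{\pi}\int_{\sR}\phi(x,z)e^{iz\xi}\,dz$, and then integrate by parts against the rapidly decaying $\phi$.

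For the decomposition (\ref{s2}) the two proofs diverge. The paper does not argue at all: it simply invokes the asymptotic behaviour of singular integrals as $\xi\to\infty$ and sends the reader to Fedoruk's book on the saddle-point method. You instead give a self-contained elementary argument: localize the constant $\phi(x,0)$ by a Schwartz cut-off $\chi$, split $\phi=\phi(x,0)\chi+\tilde\phi$, recognise the $\chi$-piece as $\mathrm{sgn}\,\xi$ plus a rapidly decreasing correction (via the classical identity $v.p.\int z^{-1}e^{iz\xi}\,dz=i\pi\,\mathrm{sgn}\,\xi$ and integration by parts on $(\chi-1)/z$), and treat the $\tilde\phi$-piece as the Fourier transform of a Schwartz function. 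The stationary-phase route in the cited reference would in principle deliver a full asymptotic expansion, but only the $O(|\xi|^{-N})$ remainder is needed here, and your cut-off argument produces it with no external input. Your remark that one must localize $\phi(x,0)$ rather than subtract it globally (since $\phi(x,0)/z$ is not integrable at infinity) is exactly the point that makes the elementary approach work.
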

\begin{proof}
The estimates (\ref{s1}) follow from the identity
\[
\frac{\partial \sigma_{S_{\sR, \phi}} (x, \, \xi)}{\partial \xi} =
\frac{1}{\pi} \int_{-\infty }^\infty \phi(x, \, z) e^{iz \xi} dz
\]
by integrating by parts, whereas (\ref{s2}) is a consequence of the
asymptotic behavior of singular integral operators as $\xi \to \infty$; see,
for instance, page 112 in \cite{Fed}.
\end{proof}

\begin{proposition} \label{p3}
Assume that, for every $v \in \cV$, the weight $w$ is such that
$w_v = \exp \sigma_v \in \cR(\cI)$ where $\cI = (-\frac{1}{p}, \,
1 - \frac{1}{p})$ with $p \in (1, \, \infty)$. Then $\cS_{\Gamma, \phi} \in
W_w (\Gamma)$ and, consequently, $\cS_{\Gamma, \phi}$ is a bounded operator on
$L_w^p (\Gamma)$.
\end{proposition}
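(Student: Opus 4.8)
Since $W_w(\Gamma)\subset\cB(L_w^p(\Gamma))$, it suffices to verify assumption A3 for $\cS:=\cS_{\Gamma,\phi}$, i.e.\ to exhibit $f\in l^1(\sZ^n)$ with $\|\chi_\alpha\cS\chi_\beta I\|_{\cB(L_w^p(\Gamma))}\le f(\alpha-\beta)$ for all $\alpha,\beta\in\sZ^n$. The plan is to fix a $\sZ^n$-periodic smooth partition of unity $\{\varphi_\iota\}$ on $\Gamma$ subordinate to a cover of $\Gamma$ by small open sets, each of which lies either inside a single open edge or inside one of the vertex neighbourhoods $F_v$, to write $\cS=\sum_{\iota,\kappa}\varphi_\iota\cS\varphi_\kappa I$, and to estimate the pieces $\varphi_\iota\cS\varphi_\kappa I$ separately, distinguishing the \emph{local} pieces, where the supports $S_\iota:=\mbox{supp}\,\varphi_\iota$ and $S_\kappa$ lie in a common set of the cover, from the \emph{far} pieces, where $\dist(S_\iota,S_\kappa)\ge\delta>0$. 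Since $\chi_\alpha\varphi_\iota I\ne0$ for only boundedly many $\iota$ (those with $S_\iota\cap\Gamma_\alpha\ne\emptyset$), summing these estimates will control $\|\chi_\alpha\cS\chi_\beta I\|$.

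For a local piece there are two cases. When $S_\iota\cup S_\kappa$ lies in a neighbourhood inside an open edge $e$, then, exactly as in the discussion preceding Proposition \ref{pr3}, $\varphi_\iota\cS\varphi_\kappa I=\varphi_\iota\,Op(\sigma_{S_{\sR,\phi}})\,\varphi_\kappa I$; by Proposition \ref{pr3} the symbol lies in $S^0$ with $|\cdot|_{2,2}$ bounded by the constants in (\ref{s1}), so Proposition \ref{pr1}(a) gives boundedness on $L^p(\sR)$, hence on $L_w^p$ since $w$ is continuous and bounded away from $0$ and $\infty$ on the compact set $S_\iota\cup S_\kappa$. When $S_\iota\cup S_\kappa$ lies in a vertex neighbourhood $F_v$, I would show that $\cS|_{F_v}$, written in the natural matrix form on the $\val(v)$ segments meeting $v$, is a matrix Mellin $\psi$do $op(a_v)$ whose leading part is the Cauchy singular integral operator on the corresponding star of rays (modulo a Mellin $\psi$do of lower order arising from $\phi(x,x-y)-\phi(x,0)=O(|x-y|)$ together with (\ref{s1})); its Mellin symbol is then the classical matrix of $\coth$-type entries, analytic in the strip $\{\mathfrak{I}\lambda\in\cI\}$, so that $a_v\in\cE(\val(v),\cI)$, and since $w_v\in\cR(\cI)$ Proposition \ref{pr2} yields boundedness on the weighted Mellin space, which models $L_w^p$ near $v$. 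In both cases the norm bounds depend only on the finitely many ``types'' of covering sets, because the symbol seminorms are uniform over $\Gamma$ by (\ref{s1}) and by the $\sZ^n$-periodicity of $\Gamma$ and $w$.

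For a far piece the kernel $\varphi_\iota(x)\frac{\phi(x,x-y)}{\pi i(y-x)}\varphi_\kappa(y)$ of $\varphi_\iota\cS\varphi_\kappa I$ has no singularity on $S_\iota\times S_\kappa$, and by (\ref{s1}) with $N$ large it is bounded there by $C_N(1+|x-y|)^{-N}$. A Hölder (Schur-type) estimate then gives
\[
\|\varphi_\iota\cS\varphi_\kappa I\|_{\cB(L_w^p)}\le C_N\,(1+\dist(S_\iota,S_\kappa))^{-N}\Big(\int_{S_\iota}w^p\Big)^{1/p}\Big(\int_{S_\kappa}w^{-p'}\Big)^{1/p'},
\]
$p'$ being the conjugate exponent. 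The two weight integrals are finite and, by periodicity, uniformly bounded, and this is exactly where the hypothesis $\cI=(-\frac1p,\,1-\frac1p)$ is used: near a vertex $w(r)$ behaves like $r^\varkappa$ with $\varkappa\in\cI$, so that $w\in L^p_{\mathrm{loc}}$ and $w^{-1}\in L^{p'}_{\mathrm{loc}}$ there.

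Finally I would assemble the bound: because $\Gamma$ is embedded in $\sR^2$ and $\sZ^n$-periodic, for the relevant pairs $\iota,\kappa$ (those with $S_\iota\cap\Gamma_\alpha\ne\emptyset$ and $S_\kappa\cap\Gamma_\beta\ne\emptyset$) the distance $\dist(S_\iota,S_\kappa)$ grows linearly in $|\alpha-\beta|$; hence $\|\chi_\alpha\cS\chi_\beta I\|_{\cB(L_w^p(\Gamma))}$ is bounded by a constant for the finitely many $\beta$ with $\overline{\Gamma_\alpha}\cap\overline{\Gamma_\beta}\ne\emptyset$ and by $C_N(1+|\alpha-\beta|)^{-N}$ otherwise, uniformly in $\alpha$. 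Taking $N>n$ exhibits a dominating $f\in l^1(\sZ^n)$, so $\cS_{\Gamma,\phi}\in W_w(\Gamma)$ and the asserted boundedness follows. The part I expect to be the main obstacle is the vertex case in the second step --- pinning down $\cS_{\Gamma,\phi}$ near a vertex as a matrix Mellin $\psi$do with symbol in $\cE(\val(v),\cI)$; this rests on the explicit Mellin symbol of the Cauchy operator on a star of rays and the location of its poles, and it is precisely where the hypothesis on $\cI$ (equivalently, on the admissible exponents $\varkappa_{\sigma_v}$) enters.
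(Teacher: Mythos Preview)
Your argument is correct, but the paper takes a considerably shorter and more elementary route that avoids precisely the obstacle you anticipate.

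Instead of invoking the $\psi$do and Mellin $\psi$do calculi for the local pieces, the paper proves directly that $\cS_{\Gamma_0,\phi}$ is bounded on $L_w^p(\Gamma_0)$ by writing
\[
\cS_{\Gamma_0,\phi}=S_{\Gamma_0}+K_{\Gamma_0,\phi},
\]
where $S_{\Gamma_0}$ is (a multiple of) the classical Cauchy singular integral operator on $\Gamma_0$ and $K_{\Gamma_0,\phi}$ has kernel $k(x,y)=\frac{1}{\pi i}\frac{\phi(x,x-y)-\phi(x,0)}{y-x}\in L^\infty(\Gamma_0\times\Gamma_0)$. The boundedness of $S_{\Gamma_0}$ on $L_w^p(\Gamma_0)$ is then quoted as a classical fact for Muckenhoupt-type weights (the hypothesis $\cI=(-\tfrac1p,\,1-\tfrac1p)$ is exactly the $A_p$-type condition needed here; the paper cites \cite{BK}, Section 4.5). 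Since the same hypothesis gives $w\in L^p(\Gamma_0)$ and $w^{-1}\in L^q(\Gamma_0)$, the bounded-kernel operator $K_{\Gamma_0,\phi}$ is bounded on $L_w^p(\Gamma_0)$ via the $L^1\to L^\infty$ factorisation you also use. Thus no partition of unity and no identification of the Mellin symbol near a vertex is required; the entire ``local'' part is handled in one stroke by appealing to the known weighted boundedness of the Cauchy operator on a compact contour.

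For the off-diagonal decay the paper proceeds essentially as you do: for $\beta$ large the block $A_\beta=\chi_0 V_{-\alpha}\cS_{\Gamma,\phi}V_{\alpha-\beta}\chi_0$ has a nonsingular kernel bounded by $C_N|\beta|^{-N}$ thanks to (\ref{s1}), and the same $L^1\to L^\infty$ trick with $\|w\|_{L^p(\Gamma_0)}\|w^{-1}\|_{L^q(\Gamma_0)}$ yields the required $l^1$ summability. So your ``far pieces'' estimate matches the paper, while your ``local pieces'' analysis---though correct---reproves from the $\psi$do side what the paper simply imports from classical singular-integral theory. The trade-off: your approach is self-contained within the $\psi$do framework set up in Section~2 and makes the A1--A2 structure of $\cS_{\Gamma,\phi}$ explicit, whereas the paper's approach is quicker and sidesteps the vertex computation you rightly flagged as the main difficulty.
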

\begin{proof}
First we prove that the operator $\cS_{\Gamma_0, \phi}$ is bounded on
$L_w^p (\Gamma_0)$. Indeed, $\cS_{\Gamma_0, \phi} = S_{\Gamma_0} +
K_{\Gamma_0, \phi}$, where
\[
(S_{\Gamma_0} u)(x) = \frac{\phi (x, \, 0)}{\pi i} \int_{\Gamma_0} \frac{u(y)}
{y-x} dy, \quad x \in \Gamma_0,
\]
is (a multiple of) the standard singular integral operator and
\[
(K_{\Gamma_0, \phi} u)(x) = \int_{\Gamma_0} k(x, \, y) u(y) dy, \quad x \in
\Gamma_0,
\]
where
\[
k(x, \, y) =  \frac{1}{\pi i} \frac{(\phi(x, \, x-y) - \phi(x, \, 0))}{y-x}
\]
Note that the condition imposed on the weight implies the boundedness of
$S_{\Gamma_0}$ on $L_w^p (\Gamma_0)$ (see, for instance, \cite{BK},
Section 4.5). Moreover, the hypotheses of the proposition guarantee that
$w \in L^p (\Gamma_0)$ and $w^{-1} \in L^q (\Gamma_0)$ where $\frac{1}{p} +
\frac{1}{q} = 1$. Since $k \in L^\infty (\Gamma_0 \times \Gamma_0)$, the
operator $K_{\Gamma_0, \phi}$ is bounded from $L^1 (\Gamma_0)$ to $L^\infty
(\Gamma_0)$. Hence, $w K_{\Gamma_0, \phi} w^{-1} I$ is bounded on $L^p
(\Gamma_0)$, whence the boundedness of $K_{\Gamma_0, \phi}$ on $L_w^p
(\Gamma_0)$.

Next we prove that $\cS_{\Gamma, \phi} \in W_w (\Gamma)$. Let $\alpha, \,
\beta \in \sZ^n$. It is easy to see that the operators defined by
\begin{eqnarray*}
(\chi_0 V_{-\alpha} \cS_{\Gamma, \phi} V_{\alpha - \beta} \chi_0 v) (x)
& = & \frac{1}{\pi i} \int_{\Gamma_0} \frac{\phi(x, \, x-y+\beta)}{y-x+\beta}
v(y) dy \\
& =: & (A_\beta v) (x), \quad x \in \Gamma_0,
\end{eqnarray*}
satisfy the estimate
\begin{equation}  \label{1}
\|A_\beta\|_{\cL (L^1 (\Gamma_0), L^\infty(\Gamma_0))} \le
\sup_{(x,y) \in \Gamma_0 \times \Gamma_0} \frac{|\phi (x, \, x-y+\beta)|}
{|y-x+\beta|}
\end{equation}
for all $\beta$ sufficiently large. Moreover, there is a constant $M>0$
such that
\[
\frac{1}{|y - x + \beta|} \le M
\]
for all $x, \, y \in \Gamma_0$ and all sufficiently large $\beta \in \sZ^n$.
Hence, estimate (\ref{1}) implies
\[
\|A_\beta\|_{\cL(L^1(\Gamma_0), L^\infty(\Gamma_0))} \le C_N |\beta|^{-N}
\]
for every $N \in \sN$ and $\beta \in \sZ^n$ large enough, which finally
yields
\begin{eqnarray*}
\|A_\beta\|_{\cL(L_w^p(\Gamma_0))}
& = & \|w A_\beta w^{-1}I\|_{\cL(L^p(\Gamma_0))} \\
& \le & \|w\|_{L^p (\Gamma_0)} \|A_\beta\|_{\cL(L^1(\Gamma_0), L^\infty
(\Gamma_0))} \|w^{-1}\|_{L^q(\Gamma_0)} \\
& \le & C_N^\prime |\beta|^{-N}
\end{eqnarray*}
for all sufficiently large $\beta$, where we wrote $C_N^\prime :=
\|w\|_{L^p(\Gamma_0)} \|w^{-1}\|_{L^q(\Gamma_0)} C_N$. Thus,
$\cS_{\Gamma, \phi} \in W_w(\Gamma)$.
\end{proof}
\subsection{The Fredholm property of operators $aI + b \cS_{\Gamma, \phi}$}
We suppose that the coefficients $a, \, b$ of the operator $A := aI
+ b \cS_{\Gamma, \phi}$ are bounded piecewise continuous functions on
$\Gamma$ which have only discontinuities of the first kind (i.e., jumps) and
which are smooth on $\Gamma \setminus \cV$. More precisely, for $\omega \in
\cV$, let $F_\omega$ be a sufficiently small neighborhood of $\omega$ such that
\[
\Gamma \cap F_\omega = \cup_{j=1}^{\val (\omega )} \gamma_j^\omega
\]
where the $\gamma_j^\omega$ are rays of the same length incident to the vertex
$\omega$. On $F_\omega$, we introduce a local system of coordinates such that
\[
\gamma_j^\omega  = \{z = \omega + r e^{i \theta_j^\omega } : r \in (0,
\, \varepsilon) \}
\]
where $0 \le \theta_1^\omega < \theta_2^\omega < \ldots < \theta_{\val(\omega)}
^\omega < 2 \pi$, and we put
\[
f_j^\omega (r) := f(\omega +r e^{i \theta_j^\omega}), \quad r \in (0,
\varepsilon)
\]
and $f^\omega := \diag (f_1^\omega, \, \ldots, \, f_{\val (\omega)}^\omega)$
for every function $f$ on $\Gamma$. We say that a function $f \in L^\infty
(\Gamma)$ belongs to the class $PC^\infty (\Gamma)$ if $f \in C^\infty (\Gamma
\setminus \cV)$ and if the one-sided limits
\[
\lim_{r \to 0} f_j^\omega (r) =: f_j(\omega)
\]
exist for every vertex $\omega$ and every $j \in \{1, \, \ldots, \, \val
(\omega)\}$. In this case we write
\begin{equation} \label{eneu1}
\tilde{f}(\omega) := \diag (f_1(\omega ), \, \ldots, \, f_{\val (\omega)}).
\end{equation}
For $\omega \in \cV$, we put $\varepsilon_k := 1$ if $\omega$ is the starting
point of the oriented edge $e_k^\omega \supset \gamma_k^\omega$ and
$\varepsilon_k := -1$ if $\omega$ is the terminating point of the edge
$e_k^\omega$. Further we define functions $\nu : [0, \, 2 \pi) \times (\sC
\setminus i \sZ) \to \sC$ by
\[
\nu(\delta, \, \zeta) := \left\{
\begin{array}{ll}
\coth (\pi \zeta) & \mbox{if} \; \delta = 0, \\
\frac{e^{(\pi -\delta) \zeta}}{\sinh (\pi \zeta)} & \mbox{if} \; \delta \in
(0, \, 2 \pi)
\end{array}
\right.
\]
and $s_{jk}^\omega : \sC \setminus i\sZ \to \sC$ by
\begin{eqnarray*}
s_{jk}^\omega (\zeta) := \varepsilon_k \left\{
\begin{array}{ll}
\nu(2 \pi + \theta_j^\omega - \theta_k^\omega, \, \zeta) & \mbox{if} \; j < k,
\\
\nu(0, \, \zeta) & \mbox{if} \; j = k, \\
\nu(\theta_j^\omega - \theta_k^\omega, \, \zeta) & \mbox{if} \;
j > k
\end{array}
\right.
\end{eqnarray*}
and we set, for $(r, \, \lambda) \in (0, \, \varepsilon) \times \sR$,
\begin{equation} \label{eneu2}
\left( \hat{\sigma}_\omega (\cS_\Gamma)c \right)(r, \, \lambda) := \left(
s_{jk}^\omega \left( \lambda + i \left( \frac{1}{p} + \varkappa_{w_\omega} (r)
\right) \right) \right) _{j, k=1}^{\val (\omega)}
\end{equation}
where $\varkappa_{w_\omega} (r) := r \frac{dv_\omega (r)}{dr}$ for $r \in
(0, \, \varepsilon)$.

We consider the operator $A = aI + b\cS_{\Gamma, \phi}$ with coefficients
$a, \, b \in PC^\infty (\Gamma)$. Our goal is to define the symbol of $A$ at
every point of $\Gamma$ and at the infinitely distant point $\infty$ in such a
way that the invertibility of the symbol corresponds to the Fredholm property
of $A$.

If $\omega \in \cV$, then the symbol at $\omega$ is the function
\[
\sigma_A^\omega (r, \, \lambda) := \tilde{a}(\omega) + \tilde{\phi}(\omega, \,
0) \tilde{b}(\omega) \hat{\sigma}_\omega (\cS_\Gamma)(r, \, \lambda),
\quad (r, \, \lambda) \in (0, \, \varepsilon) \times \sR,
\]
where $\tilde{a}(\omega)$, $\tilde{b}(\omega)$ and $\hat{\sigma}_\omega
(\cS_\Gamma)$ are given by (\ref{eneu1}) and (\ref{eneu2}), respectively.

If $x \in \Gamma \setminus \cV$, then the symbol at $x$ is given by
\[
\sigma _{A}^{x}(\xi) := a(x) + b(x) \phi(x, \, 0) \mbox{sgn} \, \xi, \quad \xi
\in \sR.
\]
The description of the symbol at $\infty$ is more involved. We will
employ the limit operators of $A$ for this goal. First we define the symbol
of the operator of multiplication by a function $f \in PC^\infty (\Gamma)$.
We write the graph $\Gamma$ as a countable union $\cup _{j \in \sN} e_j$
of edges. It is then a consequence of the Arzela-Ascoli theorem and a
standard diagonal argument that every sequence $h \in \cH$ has a subsequence $g$
such that
\[
(f|_{e_j}) (x + g(m)) \to f_j^g (x)
\]
uniformly on $e_j$ for every $j \in \sN$. With the so-defined family
$\{f_j^g\}_{n \in \sN}$ of functions on the edges of $\Gamma$, we associate a
function $f^g$ on all of $\Gamma$ in the natural way. The function $f^g$ has
the property that, for every compact subset $K$ of $\Gamma $,
\[
\lim_{m \to \infty} \sup_{x \in K} |f(x + g(m)) - f^g(x)| = 0.
\]
Hence $f^g I$ is the limit operator of $fI$ with respect to $g$. Moreover, we
obtained that $fI$ is a rich operator.

The limit operators of $fI$ are of a particularly simple form if $f$ is slowly
oscillating at infinity. This class of functions is defined as follows. Let
$f \in L^\infty(\Gamma)$. We represent $f$ in the form
\[
f(x) = f(y + \alpha) =: f_y(\alpha), \quad y \in \Gamma_0, \; \alpha \in \sZ^n,
\]
with $\Gamma_0$ again referring to the fundamental domain of $\Gamma$, and we
call $f$ {\em slowly oscillating at infinity} if
\begin{equation} \label{e2}
\lim_{\alpha \to \infty} \sup_{y \in \Gamma_0} |f_y (\beta + \alpha) -
f_y(\alpha)| = 0 \quad \mbox{for every} \; \beta \in \sZ^n.
\end{equation}
We denote the class of all functions which are slowly oscillating at infinity
by $SO (\Gamma)$. If $f \in SO(\Gamma)$, then it follows from (\ref{e2}) that
all limit operators $f^h I$ of $fI$ are operators of multiplication by a
$\sZ^n$-periodic function $f^h$, that is $V_\beta f^h = f^h$
for every $\beta \in \sZ^n$.

We consider the operator $A = aI + b \cS_{\Gamma, \phi}$ under the assumptions
that $a, \, b \in PC^\infty (\Gamma) \cap SO(\Gamma)$ and $\phi \in
C^\infty(\Gamma \times \sR^2)$, and we suppose that the function $(x, \, z)
\mapsto \phi(x, \, z)$ is slowly oscillating with respect to $x \in \Gamma$
uniformly with respect to $z$ on compacts subsets of $\sR^2$. Under these
conditions, the operator $A$ is rich, and all limit
operators of $A$ are of the form $A^h =a^h I + b^h \cS_{\Gamma, \phi}^h$ where
\[
(\cS_{\Gamma, \phi}^h u) (x) = \frac{1}{\pi i} \int_\Gamma
\frac{\phi^h (x, \, x - y) u(y)}{y-x} dy, \quad x \in \Gamma,
\]
and $\phi^h$ is the limit function of $\phi$ with respect to $h \in \cH$ in the
sense that
\[
\phi^h (x, \, z) = \lim_{m \to \infty} \phi (x + h(m), \, z)
\]
uniformly on compact subsets of $\Gamma \times \sR^2$. Since $\phi$ is
slowly oscillating with respect to the first variable, the function $\Gamma
\times \sR^2 \ni (x, \, z) \to \phi^h (x, \, z)$ is periodic with respect to the
shifts $V_\alpha$ with $\alpha \in \sZ^n$. Consequently, the operator
$\cS_{\Gamma, \phi}^h$ is invariant with respect to these shifts.

Note that the operator $\tilde{A}^h := U A^h U^{-1}$ is of the form
\[
(\tilde{A}^h u) (x, \, \alpha) = a^h(x)I + b^h(x) \sum_{\beta \in \sZ^n}
\int_{\Gamma_0} k^h (x, x - y + \alpha -\beta) u(y, \, \beta) dy
\]
where $(x, \, \alpha) \in \Gamma_0 \times \sZ^n$ and
\[
k^{} (x, \, z) = -\frac{1}{\pi i} \frac{\phi^h (x, \, z)}{z}, \quad (x, \, z)
\in \Gamma_0 \times \sR^2.
\]
We associate with $\tilde{A}^h$ the operator-valued function
\[
\mu (\tilde{A}^h) : \sT^2 \to \cB(L_w^p (\Gamma_0)), \quad
\tau \mapsto a^h I + b^h \cM^h(\tau)
\]
where
\begin{equation} \label{f1}
\cM^h(\tau) := \sum_{\gamma \in \sZ^n} M_\gamma^h \tau^\gamma, \quad \tau \in
\sT^n,
\end{equation}
and
\begin{equation} \label{f2}
(M_\gamma^h \varphi) (x) = \int_{\Gamma_0} k^h (x, \, x-y + \gamma)
\varphi(y) dy, \quad x \in \Gamma_0.
\end{equation}
Then the conditions
\begin{enumerate}
\item
$a^h (x) \pm b^h (x) \phi^h (x, \, 0) \neq 0$ for every point $x \in
\Gamma_0 \setminus \cV$,
\item
$\liminf_{r \to 0} \inf_{\lambda \in \sR} |\det (\tilde{a}^h (\omega) +
\tilde{\phi}^h (\omega, \, 0) \tilde{b}^h (\omega) \hat{\sigma}_\omega
(\cS_\Gamma)(r,\, \lambda))| > 0$ for every $\omega \in \Gamma_0 \cap \cV$,
\end{enumerate}
imply the Fredholm property of all operators $\mu (\tilde{A}^h)(\tau) \in
\cB(L_w^p (\Gamma_0))$ with $\tau \in \sT^n$.
\begin{theorem} \label{Th1}
Let $a, \, b \in PC^\infty (\Gamma) \cap SO(\Gamma)$. Then the operator $A = aI
+ b \cS_{\Gamma, \phi} : L_w^p(\Gamma) \to L_w^p (\Gamma)$ is a Fredholm
operator if and only if the following conditions are satisfied: \\[1mm]
$(a)$ $a(x) \pm b(x) \phi (x, \, 0) \neq 0$ for every $x \in \Gamma
\setminus \cV$; \\[1mm]
$(b)$ $\liminf_{r \to 0} \inf_{\lambda \in \sR} |\det (\tilde{a}(\omega)
+ \tilde{b}(\omega) \tilde{\phi}(x, \, 0) \hat{\sigma}_\omega (\cS_\Gamma) (r,
\, \lambda))| > 0$ for every $\omega \in \cV$; \\[1mm]
$(c)$ all limit operators $A^h$ of $A$ are invertible.
\end{theorem}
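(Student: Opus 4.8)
The plan is to recognise Theorem~\ref{Th1} as the instance of Theorem~\ref{te1} obtained for the particular operator $A = aI + b\cS_{\Gamma,\phi}$, so that the work splits into two parts: (I) verifying that $A$ satisfies assumptions {\rm A1}--{\rm A5}, and (II) rewriting the three abstract conditions of Theorem~\ref{te1} in the explicit form claimed here. For part~(I), assumption~{\rm A1} comes from Proposition~\ref{pr3}: restricting $\cS_{\Gamma,\phi}$ to an open edge $e$ identified with an interval of $\sR$ produces the $\psi$do $Op(\sigma_{S_{\sR,\phi,e}})$ with $\sigma_{S_{\sR,\phi,e}} \in S^0$ and leading term $\phi(x,0)\,\mathrm{sgn}\,\xi$; multiplying by $a,b$, which are smooth on $\Gamma\setminus\cV$, and using the calculus of Proposition~\ref{pr1}(b), we get {\rm A1} with $a_e(x,\xi) = a(x) + b(x)\sigma_{S_{\sR,\phi,e}}(x,\xi)$, principal symbol $a(x) + b(x)\phi(x,0)\,\mathrm{sgn}\,\xi$. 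Assumption~{\rm A3} follows from Proposition~\ref{p3} together with the fact that multiplication operators by bounded functions lie in $W_w(\Gamma)$; {\rm A4} holds since multiplication operators are of local type and, for open sets $F_1,F_2$ with disjoint closures, $\chi_{F_1}\cS_{\Gamma,\phi}\chi_{F_2}I$ has a smooth, rapidly decaying kernel and is therefore compact; and {\rm A5} has already been established in the discussion preceding the theorem, where the limit operators $\cS_{\Gamma,\phi}^h = \cS_{\Gamma,\phi^h}$ are produced along suitable subsequences from $a,b\in SO(\Gamma)$ and the slow oscillation of $\phi$ in $x$.

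The heart of part~(I) is assumption~{\rm A2}, and I expect it to be the main obstacle. Near a vertex $\omega$ I would split $\phi(x,x-y) = \phi(x,0) + \bigl(\phi(x,x-y)-\phi(x,0)\bigr)$; the difference term has a kernel that stays bounded up to $\omega$ (the singularity $1/(y-x)$ is cancelled) and, after passing to the star $\cup_j\gamma_j^\omega$ of rays and to the multiplicative variable $r$ on each ray, represents a Mellin $\psi$do with symbol in $\cE_0$, hence an admissible lower-order perturbation. For the leading term, on the star the operator becomes a matrix Mellin convolution whose $(j,k)$ entry is, up to the edge-orientation sign $\varepsilon_k$, the Mellin convolution with the degree $-1$ homogeneous kernel $(\rho e^{i\theta_j^\omega}-r e^{i\theta_k^\omega})^{-1}$; a contour-integral computation of its Mellin symbol gives $\coth(\pi\,\cdot\,)$ on the diagonal and $\nu(\theta_j^\omega-\theta_k^\omega,\cdot)$ off it, with the wrap-around $2\pi+\theta_j^\omega-\theta_k^\omega$ when $j<k$, that is, precisely the functions $s_{jk}^\omega$. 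Accounting for the passage from arclength $dx$ to the invariant measure $d\mu = dr/r$, which shifts the Mellin covariable by $i/p$, and conjugating with the weight $w_\omega = \exp\sigma_\omega$ via Proposition~\ref{Pr1.2}, which shifts it further by $i\varkappa_{\sigma_\omega}(r)$, one arrives at $\hat\sigma_\omega(\cS_\Gamma)$ as in (\ref{eneu2}); multiplying by the coefficients, which along the rays are smooth in $r$ with limits $\tilde a(\omega),\tilde b(\omega)$ at $0$, yields the vertex symbol $a_v = \sigma_A^\omega$ modulo terms vanishing as $r\to+0$, with the analytic continuation into the strip $\{\mathfrak{I}(\lambda)\in\cI_\omega\}$ governed by $\cI_\omega$ since the $s_{jk}^\omega$ have poles only on $i\sZ$. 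The careful bookkeeping of orientations and of the two covariable shifts, and the verification of the symbol estimates, are the delicate points.

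Part~(II) is then formal. Condition $(a)$ of Theorem~\ref{te1}, namely $\liminf_{\xi\to\infty}|a_e(x,\xi)|>0$ on every open edge, is equivalent to $a(x)\pm b(x)\phi(x,0)\neq 0$ because $a_e$ differs from $a(x)+b(x)\phi(x,0)\,\mathrm{sgn}\,\xi$ by a term decaying in $\xi$; this is condition $(a)$ of the present theorem. Condition $(b)$ of Theorem~\ref{te1}, namely $\liminf_{r\to+0}\inf_{\lambda\in\sR}|\det a_v(r,\lambda+i\varkappa_{\sigma_v}(r))|>0$, specialises, using the vertex symbol computed in part~(I) and the fact that the shifts $i/p$ and $i\varkappa_{\sigma_v}(r)$ are already built into (\ref{eneu2}), to $\liminf_{r\to 0}\inf_{\lambda}|\det(\tilde a(\omega)+\tilde\phi(\omega,0)\tilde b(\omega)\hat\sigma_\omega(\cS_\Gamma)(r,\lambda))|>0$, which is condition $(b)$ here. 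Condition $(c)$ of Theorem~\ref{te1}, invertibility of all limit operators of $A$, is condition $(c)$ verbatim. Thus Theorem~\ref{te1} yields the asserted criterion, and one may finally note that Corollary~\ref{co1} together with the Floquet-type analysis of the spectra of the $A^h$ through the operator family $\mu(\tilde A^h)(\tau)$, $\tau\in\sT^n$, from (\ref{f1})--(\ref{f2}) renders condition $(c)$ effective.
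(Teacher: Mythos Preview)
Your proposal is correct and follows essentially the same route as the paper: verify that $A=aI+b\cS_{\Gamma,\phi}$ satisfies A1--A5 and then specialise the three conditions of Theorem~\ref{te1}. The paper's own proof is very terse---it simply asserts A1--A5 and, for the vertex condition~(b), refers to \cite{RIzv} and Chapter~4 of \cite{RRS} rather than carrying out the Mellin-symbol computation you sketch; your detailed treatment of A2 (splitting off $\phi(x,0)$, identifying the matrix Mellin convolution on the star, tracking the covariable shifts by $i/p$ and $i\varkappa_{\sigma_\omega}(r)$) is exactly what those references supply.
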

Note that conditions $(a)$ - $(c)$ are equivalent to the invertibility
of the symbol functions $\sigma_A^x$, $\sigma_A^\omega$ and
$\mu(\tilde{A}^h)$, respectively, at every point of their domain of definition.

\begin{proof}
First we remark that $A$ satisfies conditions A1 - A5. Condition $(a)$
is the condition for the ellipticity of the restriction of $A$ on the open
edge $e$ at the point $x \in e$. This condition is necessary and sufficient
for the local Fredholmness of $A$ at $x \in e$. Condition $(b)$ is necessary and
sufficient for the local invertibility (hence, for the local Fredholmness) of $A$
at the vertex $\omega$; see, for instance, \cite{RIzv} and Chapter 4 in
\cite{RRS}. The final condition $(c)$ is equivalent to the local invertibility
of $A$ at the point $\infty$. Hence, the assertion of the theorem follows from
Theorem \ref{te1}.
\end{proof}

For an example, let $A = aI + b \cS_{\Gamma, \phi}$ and assume that
$\lim_{\Gamma \ni x \to \infty} b(x) = 0$. Then condition $(c)$ in Theorem
\ref{Th1} is equivalent to the condition
\begin{equation} \label{e3}
\liminf_{\Gamma \ni x \to \infty} |a(x)| > 0.
\end{equation}
Hence $A$ is a Fredholm operator on $L_w^p(\Gamma)$ if and only if conditions
$(a)$ and $(b)$ from Theorem \ref{Th1} and condition (\ref{e3}) hold.
\subsection{Locally compact operators on graphs}
Let again $\Gamma$ be a metric graph which is embedded into $\sR^2$ and
$\sZ^n$-periodic with $n \in \{1, \, 2\}$. We consider integral operators $A =
aI + bT$ where $a, \, b \in BUC(\Gamma) \cap SO(\Gamma)$, with $BUC(\Gamma)$ the
space of the bounded and uniformly continuous functions on $ \Gamma$, and $T$ is
an integral operator of the form
\[
(Tu)(x) = \int_\Gamma k(x-y) u(y) dy, \quad x \in \Gamma,
\]
where $k: \sR^2 \to \sC$ is a continuous function with the property that there
exist $C > 0$ and $\varepsilon > 0$ such that
\begin{equation} \label{eneu3}
|k(z)| \le C(1 + |z|)^{-2 - \varepsilon} \quad \mbox{for all} \;
z \in \sR^2.
\end{equation}
This estimate implies that $A \in W(\Gamma)$ and, consequently, $A$ is a bounded
operator on $L^p(\Gamma)$ for every $p \in [1, \, \infty]$. Moreover, the
boundedness of $k$ implies the local compactness of $T$ on every $L^p(\Gamma)$
with $1 < p < \infty$, that is, the operators $T\chi_MI$ and $\chi_M T$
are compact for every compact subset $M$ of $\Gamma$. Hence, the operator
$A$ is of local type on $L^p(\dot{\Gamma})$.

If $a \in BUC(\Gamma)$ then, by the Arzela-Ascoli theorem, every sequence $h \in
\cH$ has a subsequence $g$ such that the sequence of the functions $a(\cdot +
g(m))$ converges to a limit function $a^g$ uniformly on $\Gamma_0$. Hence, $A$
is a rich operator, and every limit operator of $A$ is of the form $A^g = a^g I
+ b^g T$ where $a^g, \, b^g$ are limit functions of $a, \, b$ with
respect to $g \in \cH$. Because of $a, \, b\in BUC (\Gamma) \cap SO(\Gamma)$,
the functions $a^g, \, b^g$ are periodic with respect to the action of $\sZ^n$.
Hence the limit operators $A^g$ are invariant with respect to shifts $V_\alpha$
with $\alpha \in \sZ^n$.

As above, we associate with $A^g$ the operator-valued symbol
\[
\mu_{A^g} : \sT^2 \to \cB(L^p(\Gamma_0)), \quad \tau \mapsto a^gI + b^g
\cM(\tau)
\]
where
\[
(\cM(\tau) u)(x) = \sum_{\alpha \in \sZ^n} \left( \int_{\Gamma_0} k(x - y
+\alpha) u(y) dy \right) \tau^\alpha, \quad x \in \Gamma_0, \, \tau \in
\sT^n.
\]
\begin{theorem} \label{th1}
Under the above assumptions for $a, \, b$ and $k$, the operator $A = aI + bT$ is
a Fredholm operator on $L^p (\Gamma)$ for $1 < p < \infty$ if and only if the
following conditions are satisfied: \\[1mm]
$(a)$ $\inf_{x \in \Gamma} |a(x)| > 0$; \\[1mm]
$(b)$ every limit operator $A^g$ of $A$ is invertible on $L^p(\Gamma)$.
\end{theorem}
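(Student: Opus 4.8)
The plan is to deduce the theorem from Theorem~\ref{t1}, after which the only work is to match its two hypotheses with conditions $(a)$ and $(b)$. All the structural prerequisites have already been recorded in the paragraphs preceding the theorem: the decay estimate (\ref{eneu3}) gives $A = aI + bT \in W(\Gamma)$, so $A$ is bounded on $L^p(\Gamma)$; the local compactness of $T$ makes $A$ of local type on $\dot\Gamma$; and Arzel\`a--Ascoli applied to $a, b \in BUC(\Gamma)$ shows that $A$ is rich, with every limit operator of the form $A^g = a^g I + b^g T$. Hence Theorem~\ref{t1} (with trivial weight) applies and tells us that $A$ is Fredholm on $L^p(\Gamma)$ if and only if $A$ is locally Fredholm at every point $x \in \Gamma$ and every limit operator $A^g$ is invertible on $L^p(\Gamma)$. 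The latter alternative is exactly condition $(b)$, so it remains to prove that, \emph{in the presence of} $(b)$, local Fredholmness at every finite point is equivalent to $(a)$.

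Next I would pin down local Fredholmness at finite points. Fix $x \in \Gamma$ and a relatively compact open neighborhood $F$ of $x$. Since $T$ is locally compact, $\chi_F T \chi_F I \in \cK(L^p(\Gamma))$, so $\chi_F A \chi_F I = \chi_F a \chi_F I + b\, \chi_F T \chi_F I \equiv a \chi_F I$ modulo compacts. As $a \in BUC(\Gamma)$ is continuous, a standard argument (using $a^{-1}\chi_F I$ as a local parametrix when $a(x) \neq 0$, and a singular sequence concentrating at $x$ when $a(x) = 0$) shows that $A$ is locally Fredholm at $x$ precisely when $a(x) \neq 0$. The same computation applies verbatim to each limit operator $A^g = a^g I + b^g T$, since the convolution part $T$ is unchanged and $a^g, b^g$ are bounded; thus $A^g$ is of local type, and it is locally Fredholm at $y$ iff $a^g(y) \neq 0$. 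Because $A^g$ is $\sZ^n$-periodic, local Fredholmness of $A^g$ at all points of $\Gamma_0$ is equivalent to local Fredholmness at all of $\Gamma$.

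Now the equivalence itself. If $\inf_{x \in \Gamma} |a(x)| > 0$, then $a$ is nowhere zero and $A$ is locally Fredholm at every $x \in \Gamma$. Conversely, assume $(b)$ holds and $a(x) \neq 0$ for all $x \in \Gamma$, but $\inf_\Gamma |a| = 0$; pick $x_m \in \Gamma$ with $|a(x_m)| \to 0$. Bounded subsets of $\Gamma$ are relatively compact, since the action of $\sZ^n$ is co-compact and $\Gamma_0$ is compact; hence if a subsequence of $(x_m)$ stayed bounded it would converge to some $x_\ast \in \Gamma$ with $a(x_\ast) = 0$, a contradiction. So $x_m \to \infty$. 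Write $x_m = y_m + g_m$ with $y_m \in \Gamma_0$ and $g_m \to \infty$, and pass to a subsequence so that $y_m \to y_\ast \in \Gamma_0$ and the limit operator $A^g$ of $A$ along $g = (g_m)$ exists; then $a^g(y_\ast) = \lim_m a(y_m + g_m) = \lim_m a(x_m) = 0$. But $A^g$ is invertible by $(b)$, hence Fredholm, and since $A^g$ is of local type it is locally Fredholm at $y_\ast$ by Proposition~\ref{p1}; by the previous paragraph this forces $a^g(y_\ast) \neq 0$, a contradiction. Therefore $\inf_\Gamma |a| > 0$, which is $(a)$, and the theorem follows.

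The only step I expect to be genuinely delicate is the last one: upgrading pointwise non-vanishing of $a$ to the uniform bound $(a)$. This is where one really needs the limit operators, together with the Arzel\`a--Ascoli compactness of $\{a(\cdot + g) : g \in \sZ^n\}$ and Simonenko's local principle (Proposition~\ref{p1}) applied to the \emph{invertible} limit operators. Everything else is routine bookkeeping resting on the local compactness of $T$ and on Theorem~\ref{t1}.
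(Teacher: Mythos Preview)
Your proof is correct and follows essentially the same route as the paper: local compactness of $T$ reduces local Fredholmness at each finite point to non-vanishing of $a$, and the limit-operator criterion handles the point at infinity, all assembled via Simonenko's principle (the paper cites Proposition~\ref{p1} directly, you go through the equivalent Theorem~\ref{t1}). Your final paragraph, upgrading pointwise non-vanishing of $a$ to the uniform bound $\inf_\Gamma |a|>0$ by feeding a drifting zero into a limit operator and contradicting $(b)$, carefully fills in a step that the paper's three-line proof leaves implicit.
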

Note that condition $(b)$ is equivalent to the invertibility of the operator
$\mu_{A^g}(\tau)$ on $L^p(\Gamma_0)$ for every $\tau \in \sT^n$.

\begin{proof}
Since $T$ is locally compact, the operator $A$ is locally Fredholm at the point
$x \in \Gamma $ if and only if it satisfies condition $(a)$. Condition $(b)$
is necessary and sufficient for the local invertibility of $A$ at the point
$\infty$. Hence Theorem \ref{th1} follows from Simonenko's local
principle (Proposition \ref{p1}).
\end{proof}
{\small Authors' addresses: \\[3mm]
Vladimir S. Rabinovich, Instituto Polit\'{e}cnico Nacional, \\
ESIME-Zacatenco, Av. IPN, edif. 1, M\'{e}xico D.F., 07738, M\'{E}XICO. \\
e-mail: vladimir.rabinovich@gmail.com \\[1mm]
Steffen Roch, Technische Universit\"{a}t Darmstadt, \\
Schlossgartenstrasse 7, 64289 Darmstadt, Germany.\\
e-mail: roch@mathematik.tu-darmstadt.de}
\end{document}